\documentclass[12pt]{amsart}
\usepackage{amscd,amssymb,latexsym}
\usepackage{fullpage}

\DeclareMathOperator*{\hocolimit}{\mathrm{holim}}
\newcommand{\hcl}[1]{{\displaystyle
            \hocolimit_{\substack{\hbox to 20pt{\rightarrowfill} \\ #1}}\,}}

\newcommand{\Fbarbar}{\overline{\overline{F}}}
\newcommand{\Fbar}{\overline{F}}
\newcommand{\Gbar}{\overline{G}}
\newcommand{\Kbar}{\overline{K}}
\newcommand{\tensorQ}{\otimes_Q}
\newcommand{\Jac}{\mathrm{J}}
\newcommand{\qcoh}{\mathrm{qCoh}}
\newcommand{\coh}{\mathrm{coh}}
\newcommand{\Inj}{\mathrm{Inj}}
\newcommand{\StMod}{\mathrm{StMod}}
\newcommand{\stmod}{\mathrm{stmod}}
\newcommand{\growth}{\mathrm{growth}}

\newcommand{\fg}{\mathcal{FG}}
\newcommand{\chivec}{\mathbf{\chi}}
\newcommand{\HOmega}{H^{\Omega}}
\newcommand{\omegabar}{\overline{\omega}}

\newcommand{\Jbar}{\overline{J}}
\newcommand{\Jbarbar}{\overline{\overline{J}}}

\newcommand{\ass}{\mathrm{Ass}}

\newcommand{\Mdef}[2]{\newcommand{#1}{\relax \ifmmode #2 \else $#2$\fi}}


\newcommand{\thick}{\mathrm{thick}}
\newcommand{\loc}{\mathrm{loc}}
\newcommand{\len}{\mathrm{len}}
\newcommand{\finbuilds}{\models}
\newcommand{\builds}{\vdash}

\newcommand{\tensor}{\otimes}


\newcommand{\sdr}{\rtimes}
\newcommand{\map}{\mathrm{map}}
\newcommand{\Hom}{\mathrm{Hom}}
\newcommand{\Tor}{\mathrm{Tor}}
\newcommand{\Ext}{\mathrm{Ext}}
\Mdef{\bhom}{\mathbf{\hat{H}om}}

\Mdef{\Mod}{\mathrm{mod}}

\newcommand{\st}{\; | \;}


\newtheorem{thm}{Theorem}[section]
\newtheorem{lemma}[thm]{Lemma}
\newtheorem{prop}[thm]{Proposition}
\newtheorem{cor}[thm]{Corollary}
\newtheorem{wish}[thm]{Wish}

\theoremstyle{definition}

\newtheorem{defn}[thm]{Definition}

\newtheorem{example}[thm]{Example}

\newtheorem{remark}[thm]{Remark}

\newcommand{\qqed}{\qed \\[1ex]}
\renewenvironment{proof}[1][\hspace*{-.8ex}]{\noindent {\bf Proof #1:\;}}{\qqed}


\newcommand{\cosusp}[1]{\Sigma_{#1}}

\Mdef{\PH} {\Phi^H}
\Mdef{\PK} {\Phi^K}
\Mdef{\PL} {\Phi^L}
\Mdef{\PT} {\Phi^{\T}}

\Mdef{\ef}{E{\cF}_+}
\Mdef{\etf}{\widetilde{E}{\cF}}
\Mdef{\eg}{E{G}_+}
\Mdef{\etg}{\tilde{E}{G}}


\Mdef{\infl}{\mathrm{inf}}
\Mdef{\defl}{\mathrm{def}}
\Mdef{\res}{\mathrm{res}}
\Mdef{\ind}{\mathrm{ind}}

\Mdef{\univ}{\mathcal{U}}


\Mdef{\Fp}{\mathbb{F}_p}
\Mdef{\Zpinfty}{\Z /p^{\infty}}
\Mdef{\Zpadic}{\Z_p^{\wedge}}


\newcommand{\bi}{\begin{itemize}}
\newcommand{\be}{\begin{enumerate}}
\newcommand{\bc}{\begin{center}}
\newcommand{\bd}{\begin{description}}
\newcommand{\ei}{\end{itemize}}
\newcommand{\ee}{\end{enumerate}}
\newcommand{\ec}{\end{center}}
\newcommand{\ed}{\end{description}}

\newcommand{\trichotomy}[3]{\left\{ \begin{array}{ll}#1\\#2\\#3 \end{array}\right.}

%
%

%
%


\newcommand{\lra}{\longrightarrow}
\newcommand{\lla}{\longleftarrow}


\Mdef{\we}{\mathbf{we}}
\Mdef{\fib}{\mathbf{fib}}
\Mdef{\cof}{\mathbf{cof}}
\Mdef{\BI}{\mathcal{BI}}

\newcommand{\ann}{\mathrm{ann}}
\newcommand{\fibre}{\mathrm{fibre}}
\newcommand{\cofibre}{\mathrm{cofibre}}



\Mdef{\A}{\mathbb{A}}
\Mdef{\B}{\mathbb{B}}
\Mdef{\C}{\mathbb{C}}
\Mdef{\D}{\mathbb{D}}
\Mdef{\E}{\mathbb{E}}
\Mdef{\T}{\mathbb{T}}
\Mdef{\F}{\mathbb{F}}
\Mdef{\G}{\mathbb{G}}
\Mdef{\I}{\mathbb{I}}
\Mdef{\N}{\mathbb{N}}
\Mdef{\Q}{\mathbb{Q}}
\Mdef{\R}{\mathbb{R}}
\Mdef{\bbS}{\mathbb{S}}
\Mdef{\Z}{\mathbb{Z}}

\Mdef{\bA}{\mathbb{A}}
\Mdef{\bB}{\mathbb{B}}
\Mdef{\bC}{\mathbb{C}}
\Mdef{\bD}{\mathbb{D}}
\Mdef{\bE}{\mathbb{E}}
\Mdef{\bF}{\mathbb{F}}
\Mdef{\bG}{\mathbb{G}}
\Mdef{\bH}{\mathbb{H}}
\Mdef{\bI}{\mathbb{I}}
\Mdef{\bJ}{\mathbb{J}}
\Mdef{\bK}{\mathbb{K}}
\Mdef{\bL}{\mathbb{L}}
\Mdef{\bM}{\mathbb{M}}
\Mdef{\bN}{\mathbb{N}}
\Mdef{\bO}{\mathbb{O}}
\Mdef{\bP}{\mathbb{P}}
\Mdef{\bQ}{\mathbb{Q}}
\Mdef{\bR}{\mathbb{R}}
\Mdef{\bS}{\mathbb{S}}
\Mdef{\bT}{\mathbb{T}}
\Mdef{\bU}{\mathbb{U}}
\Mdef{\bV}{\mathbb{V}}
\Mdef{\bW}{\mathbb{W}}
\Mdef{\bX}{\mathbb{X}}
\Mdef{\bY}{\mathbb{Y}}
\Mdef{\bZ}{\mathbb{Z}}

\Mdef{\cA}{\mathcal{A}}
\Mdef{\cB}{\mathcal{B}}
\Mdef{\cC}{\mathcal{C}}
\Mdef{\mcD}{\mathcal{D}} 
\Mdef{\cE}{\mathcal{E}}
\Mdef{\cF}{\mathcal{F}}
\Mdef{\cG}{\mathcal{G}}
\Mdef{\mcH}{\mathcal{H}} 
\Mdef{\cI}{\mathcal{I}}
\Mdef{\cJ}{\mathcal{J}}
\Mdef{\cK}{\mathcal{K}}
\Mdef{\mcL}{\mathcal{L}}

\Mdef{\cM}{\mathcal{M}}
\Mdef{\cN}{\mathcal{N}}
\Mdef{\cO}{\mathcal{O}}
\Mdef{\cP}{\mathcal{P}}
\Mdef{\cQ}{\mathcal{Q}}
\Mdef{\mcR}{\mathcal{R}}
\Mdef{\cS}{\mathcal{S}}
\Mdef{\cT}{\mathcal{T}}
\Mdef{\cU}{\mathcal{U}}
\Mdef{\cV}{\mathcal{V}}
\Mdef{\cW}{\mathcal{W}}
\Mdef{\cX}{\mathcal{X}}
\Mdef{\cY}{\mathcal{Y}}
\Mdef{\cZ}{\mathcal{Z}}

\Mdef{\tA}{\tilde{A}}
\Mdef{\tB}{\tilde{B}}
\Mdef{\tC}{\tilde{C}}
\Mdef{\tE}{\tilde{E}}
\Mdef{\tH}{\tilde{H}}
\Mdef{\tK}{\tilde{K}}
\Mdef{\tL}{\tilde{L}}
\Mdef{\tM}{\tilde{M}}
\Mdef{\tN}{\tilde{N}}
\Mdef{\tP}{\tilde{P}}

\Mdef{\tf}{\tilde{f}}

\Mdef{\Ab}{\overline{A}}
\Mdef{\Bb}{\overline{B}}
\Mdef{\Cb}{\overline{C}}
\Mdef{\Db}{\overline{D}}
\Mdef{\Eb}{\overline{E}}
\Mdef{\Hb}{\overline{H}}
\Mdef{\Gb}{\overline{G}}
\Mdef{\Ib}{\overline{I}}
\Mdef{\Kb}{\overline{K}}
\Mdef{\Lb}{\overline{L}}
\Mdef{\Mb}{\overline{M}}
\Mdef{\Nb}{\overline{N}}
\Mdef{\Qb}{\overline{Q}}
\Mdef{\Tb}{\overline{T}}

\Mdef{\db}{\overline{d}}
\Mdef{\hb}{\overline{h}}
\Mdef{\qb}{\overline{q}}
\Mdef{\rb}{\overline{r}}
\Mdef{\tb}{\overline{t}}
\Mdef{\ub}{\overline{u}}
\Mdef{\vb}{\overline{v}}

\Mdef{\hc}{\hat{c}}
\Mdef{\he}{\hat{e}}
\Mdef{\hf}{\hat{f}}
\Mdef{\hA}{\hat{A}}
\Mdef{\hH}{\hat{H}}
\Mdef{\hJ}{\hat{J}}
\Mdef{\hM}{\hat{M}}
\Mdef{\hP}{\hat{P}}
\Mdef{\hQ}{\hat{Q}}

\Mdef{\thetab}{\overline{\theta}}
\Mdef{\phib}{\overline{\phi}}

\Mdef{\uA}{\underline{A}}
\Mdef{\uB}{\underline{B}}
\Mdef{\uC}{\underline{C}}
\Mdef{\uD}{\underline{D}}

\Mdef{\bolda}{\mathbf{a}}
\Mdef{\boldb}{\mathbf{b}}
\Mdef{\boldD}{\mathbf{D}}


\Mdef{\fm}{\frak{m}}


\Mdef{\eps}{\epsilon}

\input{xypic}

\setcounter{tocdepth}{1}
\begin{document}
\title{Complete intersections and derived categories.}

\author{D.J.Benson}
\address{Department of Mathematics, University of Aberdeen, Aberdeen AB24 3UE,
UK}
\email{bensondj@maths.abdn.ac.uk}

\author{J.P.C.Greenlees}
\address{School of Mathematics and Statistics, Hicks Building, 
Sheffield S3 7RH, UK}
\email{j.greenlees@sheffield.ac.uk}
\date{}

\begin{abstract}
We propose a definition of when a triangulated category should be
considered a complete intersection. We show that for the derived category 
of a complete local Noetherian commutative ring $R$,  
the condition on the derived
category $D(R)$ holds precisely when $R$ is 
a complete intersection in the classical sense. 
\end{abstract}

\thanks{The research was partially supported by EPSRC Grant number
EP/E012957/1. }
\maketitle


\section{Introduction}
\label{sec:Intro}
\subsection{The context.}
In algebraic geometry, the best behaved varieties are complete intersections, 
which are subvarieties of affine space specified by the right number 
of equations in the sense that 
if they are of codimension $c$ then only $c$ equations are required. 
Considering this locally, we obtain an analogue for
 commutative local Noetherian rings $R$, where the corresponding 
condition is that 
$$R=Q/(f_1,\ldots, f_c)$$
for a regular local ring $Q$ by a regular sequence, $f_1,f_2, \ldots, f_c$
The smallest possible value of $c$ (as $Q$ and the regular sequence vary)
is called the {\em codimension} of $R$. In commutative algebra the terminology
is slightly different, so that $R$ is  a {\em complete intersection (ci)} 
if its completion is of the stated form.

We recall below that if $R$ is ci of codimension $c$, one may construct
a resolution of any finitely generated module growing like a polynomial 
of degree $c-1$. In particular,  the ring $\Ext_R^*(k,k)$ has polynomial growth
(we say that $R$ is {\em gci}).
Perhaps the most striking result about ci rings is the theorem of Gulliksen 
\cite{Gulliksen:1971a} 
which states that the ci and gci conditions are equivalent. 

In fact one may go further and show that the resolutions are constructed
in an eventually multi-periodic fashion. In particular, 
for a ci local ring $\Ext_R^*(k,k)$ is finite as a module over a 
commutative Noetherian ring of operators \cite[3.1]{Gulliksen:1974a}. 
This in turn opens the way to 
the theory of support varieties for modules over a ci ring \cite{Avramov/Buchweitz:2000a}.

\subsection{The aspiration.}
We are interested in extending the notion of ci rings to commutative
differential graded algebras (DGAs), commutative ring spectra 
\cite{Elmendorf/Kriz/Mandell/May:1997a, Hovey/Shipley/Smith:2000a} and 
related contexts. We have a particular interest in studying the cochains $C^*(BG)$ on the
classifying space of a finite group $G$, partly because of the consequences
for the cohomology ring $H^*(BG)$. We would like to follow the model of
\cite{Dwyer/Greenlees/Iyengar:2006a}, 
which shows that $C^*(BG)$ is Gorenstein in a homotopical 
sense for all finite groups $G$. This structural result
then establishes the existence of a local cohomology theorem for $H^*(BG)$, and in particular proves the
result of Benson and Carlson \cite{Benson/Carlson:1994a} 
that if $H^*(BG)$ is Cohen-Macaulay it is also Gorenstein. 

For $C^*(BG)$, the counterpart of the Ext algebra $\Ext_R^*(k,k)$ is 
the loop space homology of the $p$-completed classifying space, 
$H_*(\Omega(BG_p^{\wedge}))$. Of course if $G$ is a $p$-group, this is
simply the group ring $kG$ in degree 0. More generally, it is known 
to be of polynomial growth in many cases (Chevalley groups 
away from the defining characteristic, $A_4$ or $M_{11}$
in characteristic 2, ....). However
R.~Levi \cite{Levi:1995a,Levi:1996a,Levi:1997a,Levi:1999a} has
proved there is a dichotomy between small growth and large growth,
and given simple examples (semidirect products $(C_p \times C_p)\sdr C_3$
at $p\geq 5$) where the growth is exponential. 
Our aim is to give a version of the definition of ci,
which is homotopy invariant and applies to the module category; when 
it holds for $C^*(BG)$, it should imply that $\Omega(BG_p^{\wedge})$ has
 polynomial growth.

The purpose of the present paper is to lay foundations for these 
generalizations by giving a characterization of complete intersections 
in terms of the derived category $D(R)$ of $R$-modules. It will be apparent
that the definition applies to related triangulated categories such as
the homotopy category of $C^*(BG)$-modules. We explain briefly why 
existing definitions are not suitable for us. Firstly, although polynomial 
growth rate of the endomorphism ring of $k$ is an interesting and 
important property, its application is somewhat limited since 
it relies on having something like a Hilbert series as a measure of size. 
The difficulty with  Avramov's \cite{Avramov:1999a} characterization 
in terms of Andr\'e-Quillen homology (Quillen's conjecture) is rather
different. To start with, it cannot be stated 
in terms of the (additive) derived category, but more significant
for us is that it turns out that the topological Andr\'e-Quillen cohomology 
vanishes rather generally in the case of cochains on a space 
\cite{Mandell:2001a}, so that it includes far too many spaces. 

The topological and representation theoretic examples will be treated
elsewhere \cite{qzci, pzci, kGzci}.

\subsection{Organization of the paper.}

In Section \ref{sec:thedefinition} we will give a definition which 
applies to any triangulated category with suitable additional structure, 
and briefly describe some interesting classes of examples.

We then consider the particular example of the derived category of a commutative local ring, showing that our new definition does characterize ci rings. More
precisely,  in Section \ref{sec:statements} we outline our results for 
commutative
rings. Section \ref{sec:jac} makes some observations about the 
endomorphisms of the identity functor of the derived category. 
Sections \ref{sec:hypersurfaces1} and \ref{sec:ciiszci} 
describe the Avramov-Gulliksen
multiperiodic resolutions in a suitable form, showing that a ci ring 
satisfies our condition, and in Section \ref{sec:zciisgci} we establish the 
connection with the Ext algebra having polynomial growth rate.  


\subsection{Grading conventions.}
We will have cause to discuss homological and cohomological gradings.
Our experience is that this a frequent source of confusion, so we adopt
the following conventions. First, we refer to lower gradings as {\em degrees}
and upper gradings as {\em codegrees}. As usual, one may convert gradings to 
cogradings via the rule $M_n=M^{-n}$. Thus both chain complexes and cochain
complexes have differentials of degree $-1$ (which is to say, of codegree 1).
This much is standard. However, since we need to deal with both chain complexes
and cochain complexes it is essential to have separate notation for
homological suspensions ($\Sigma^i$) and  cohomological suspensions 
($\Sigma_i$): these are defined by 
$$(\Sigma^iM)_n=M_{n-i} \mbox{ and } (\Sigma_iM)^n=M^{n-i}.$$
Thus, for example, with reduced chains and cochains  of a based
space $X$,  we have
$$C_*(\Sigma^iX)=\Sigma^iC_*(X) \mbox{ and } C^*(\Sigma^iX)=\Sigma_iC^*(X).$$

\section{The definition.}
\label{sec:thedefinition}
Before giving the definition, we recall some standard technology and 
background results. 

\subsection{Terminology for triangulated categories.}
Recall that an object $X$ of a triangulated category $\cT$ is called
{\em small} if the natural map
$$\bigoplus_i [X,Y_i] \lra [X,\bigoplus_i Y_i]$$
is an isomorphism for any set of objects $Y_i$. 

A {\em thick} subcategory of 
$\cT$ is a full subcategory closed under completion of triangles and taking
retracts. We write $\thick (X)$ for the smallest thick subcategory containing
$X$, and if $A \in \thick (X)$ we also say 
`{\em $X$ finitely builds  $A$}' and write $X\finbuilds A$. 

A {\em localizing } subcategory of 
$\cT$ is a full subcategory closed under completion of triangles and taking
arbitrary coproducts. We write $\loc (X)$ for the smallest localizing
 subcategory containing
$X$, and if $A\in \loc (X)$ we also say `{\em $X$ builds $A$}' 
and write $X\builds A$.

Following \cite{Dwyer/Greenlees/Iyengar:2006b} 
we say that $X$ is {\em virtually small} if
$\thick (X)$ contains a non-trivial small object $W$, and we say that
any such $W$ is a {\em witness} for the fact that $X$ is virtually small.   

\subsection{Regularity.}
The archetype of a derived characterization  of a property of rings
is  the Auslander-Buchsbaum-Serre characterization of regularity:
a Noetherian local ring $R$ is regular if and only if 
the residue field has  a finite resolution by finitely generated
projectives; when this holds, every finitely generated module has 
a finite resolution by finitely generated
projectives. Since small objects are precisely those equivalent to 
 finite complexes of finitely generated projectives, this gives a 
characterization in terms of structure intrinsic to the
triangulated category together with a small amount of additional structure. 

We will rephrase these results in a style suggestive of our intentions
for complete intersections. First, 
since finitely generated modules are not characterized in terms
of the triangulated structure, we posit a triangulated subcategory 
$\fg$ of objects to play the corresponding role. In  the case of $D(R)$ 
we take the subcategory $\fg$ to consist of bounded complexes of 
finitely generated modules. 

We may package this in three definitions. The first is ideal-theoretic, 
the second refers to the rate of {\bf g}rowth of the derived endomorphism 
of the simple module $k$, and the third is in terms of the {\bf h}omotopy
theory of modules. 

\begin{defn}
(i) A local Noetherian ring $R$ is {\em regular} if the maximal ideal 
is generated by a regular sequence.

(ii) A local Noetherian ring $R$ is {\em g-regular} if $\Ext_R^*(k,k)$ is 
finite dimensional.

(iii) A local Noetherian ring $R$ is {\em h-regular} if every finitely generated
module is small in $D(R)$.
\end{defn}

It is not hard to see that g-regularity is equivalent to h-regularity or that
regularity implies g-regularity. Serre proved that g-regularity implies 
regularity, so the three conditions are equivalent.

We will provide a characterization of ci rings in the same style. 

\subsection{Work of Dwyer-Greenlees-Iyengar.}

We set the context by recalling the work  
of \cite{Dwyer/Greenlees/Iyengar:2006b} which led us to 
our definition. 

\begin{defn} (\cite{Dwyer/Greenlees/Iyengar:2006b})
The ring $R$ is said to be a  {\em quasi-complete intersection (qci)}
if every object of $\fg$  is virtually small. 
\end{defn}

The reason for the terminology is that it was shown 
in \cite{Dwyer/Greenlees/Iyengar:2006b} 
that any ci local ring is qci, and conversely that every qci ring is 
Gorenstein. Examples were given to show that not all Gorenstein rings
are qci. 

The difficulty with the qci condition is that there is no control over how
a complex $X$ builds a non-trivial small object. It is therefore hard to 
deduce consequences about polynomial growth for example. 

\subsection{The centre of a triangulated category.}

To start with we suppose given a triangulated category $\cT$. The 
{\em centre} $Z\cT$ of $\cT$ is defined to be the graded ring of 
endomorphisms of the identity functor. This is a graded commutative
ring.

If $\cT $ is tensor triangulated, the endomorphism ring  of
the unit element gives elements of $Z\cT$, but typically there 
are many others. 

Given $\chi \in Z\cT$ of degree $a$,  for any object $X$, we may form 
the mapping cone $X/\chi$ of $\chi : \Sigma^a X \lra X$. This is
well defined up to non-unique equivalence. Indeed, given a map 
$f: X \lra Y$, the axioms of a triangulated category give
 a map $f:X/\chi \lra Y/\chi$ consistent with 
the defining triangles, but this is not usually unique or compatible
with composition. 

Now given a sequence of elements $\chi_1 , \chi_2, \ldots, \chi_n$
we may iterate this construction, and form 
$$K(X; \chivec) := X/\chi_1/\chi_2/\cdots /\chi_n, $$
which we sometimes refer to as the Koszul complex of the sequence. 

\begin{lemma}
Up to equivalence $K(X;\chivec)$ only depends on the sequence, and 
is independent of the order of the 
elements $\chi_i$.
\end{lemma}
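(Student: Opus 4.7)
The plan is to argue by induction on $n$, reducing order-independence to the case $n=2$. For a single element, $X/\chi_1$ is the mapping cone of a definite morphism and is therefore well defined up to equivalence by standard properties of triangulated categories. Since every permutation of $\{1,\ldots,n\}$ is a product of adjacent transpositions, to establish the full order-independence it suffices---applying the argument with $X$ replaced by an intermediate Koszul object $X/\chi_1/\cdots/\chi_{i-1}$---to show that $X/\chi/\chi' \simeq X/\chi'/\chi$ for any pair $\chi,\chi' \in Z\cT$ of respective degrees $a,a'$.

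For the two-element case the key input is that $\chi$ and $\chi'$, as elements of the centre, are natural transformations of the identity functor, and graded commutativity of $Z\cT$ produces the (graded) commutative square
$$\begin{CD}
\Sigma^{a+a'}X @>{\chi}>> \Sigma^{a'}X \\
@V{\chi'}VV @VV{\chi'}V \\
\Sigma^{a}X @>>{\chi}> X
\end{CD}$$
which commutes up to the Koszul sign $(-1)^{aa'}$; such a sign is harmless for mapping cones, since negating a morphism yields an equivalent cone. Naturality of $\chi'$ identifies the map induced on the horizontal cones with the action of $\chi'$ on $X/\chi$, whose mapping cone is exactly $X/\chi/\chi'$; taking vertical cones first similarly produces $X/\chi'/\chi$. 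A standard octahedral argument---the triangulated analogue of the fact that the total complex of a bicomplex is symmetric in its two directions---then identifies both iterated cones with the total cone of the square, so they are equivalent.

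The principal technical obstacle is the non-uniqueness inherent in the construction: the mapping cone $Y/\chi$ is defined only up to non-canonical equivalence, and the map induced on cones by a commutative square is also non-unique. One must verify that these ambiguities propagate consistently so as to leave the equivalence class of $K(X;\chivec)$ unchanged. This is handled by observing that any two realisations of $X/\chi$ are linked by an equivalence that can be used to transport the subsequent stages of the construction, while the map induced at each step coincides up to equivalence with the action of the next $\chi_i$ by naturality. Carried through the induction, these observations show that $K(X;\chivec)$ has a well defined equivalence class depending only on the unordered collection of elements $\chi_i$.
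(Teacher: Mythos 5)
Your argument follows the paper's proof in essentially the same way: well-definedness is handled by induction, transporting an equivalence of partial Koszul objects across the commuting square furnished by centrality of the next $\chi_i$, and order-independence is reduced to adjacent transpositions and settled for $n=2$ by applying the octahedral axiom to the square formed by $\chi$ and $\chi'$. The only difference is that you spell out the Koszul sign and the non-uniqueness issues a little more explicitly than the paper does; the substance is identical.
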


\begin{proof}
For the first statement we argue by induction on $n$. We have already 
dealt with the case $n=1$. Suppose then that $n\geq 2$ and
$Y$ and $Y'$ are two models for $K(X; \chi_1, \ldots , \chi_{n-1})$. 
By induction there is an equivalence $e:Y \lra Y'$. Since $\chi_n$ is in 
the centre we have a commutative square
$$\begin{array}{ccc}
Y&\stackrel{\chi_n}\lra & Y\\
e \downarrow \simeq &&e\downarrow \simeq \\
Y'&\stackrel{\chi_n}\lra & Y',
\end{array}$$
and so the axioms provide an equivalence $Y/\chi_n \simeq Y'/\chi_n$ between
any choices of completions of the triangles. 

For the second statement, it suffices to deal with the case $n=2$, since
transpositions generate the symmetric group.  For the case $n=2$ we 
apply the octahedral axiom to the square
\begin{equation*}
\begin{array}{ccc}
X&\stackrel{\chi_1}\lra & X\\
\chi_2 \downarrow &&\downarrow \chi_2 \\
X&\stackrel{\chi_1}\lra & X.
\end{array}
\qedhere
\end{equation*}
\end{proof}

\subsection{Examples.}
We consider a triangulated category $\cT$, together with a triangulated
subcategy $\fg$ of objects we call {\em finitely generated}. We recommend
that readers concentrate on the first example, viewing the others as
generalizations to be investigated. We treat Example \ref{eg:localalgebra}
 in the present paper.  
We defer the investigations of Examples \ref{eg:cochains} and \ref{eg:squeezed}
to \cite{pzci} and \cite{kGzci}.

\begin{example}
\label{eg:localalgebra}
Our motivating example has $\cT =D(R)$ for a commutative Noetherian ring $R$. 
The small objects are those equivalent to a bounded complex of finitely 
generated
projectives.  The class  $\fg$ consists of the bounded complexes of finitely 
generated modules. 
\end{example}

\begin{example}
\label{eg:cochains}
The topological example has $\cT =Ho(C^*(X))$ for a space $X$.
The small objects include the modules  $C^*(Y)$ where $Y$ is a space with a 
map $f:Y\lra X$ and so that the homotopy fibre $F(f)$ has finite homology.
\end{example}

\begin{example}
\label{eg:squeezed}
An algebraic replacement for the category $Ho(C^*(BG))$
is given by taking  $\cT$ to be the $k$-cellularization of 
$K(\Inj \; kG)$ for a finite group $G$ as in \cite{Benson/Krause:2008a}. 
\end{example}

\begin{example}\label{eg:stable}
The category $\cT =\StMod ( kG)$ is the stable module category for a finite 
group $G$. The small objects consist of the subcategory $\stmod ( kG)$ 
of finitely generated modules. 

This suggests that all $G$ should be considered stably regular and that 
$\fg$ should again consist of finitely generated modules. 
\end{example}

\begin{example}
\label{eg:schemes}
For a Noetherian scheme $X$ we take the derived category $\cT =D(\qcoh (X))$  
of complexes of quasi-coherent sheaves on $X$. The small
objects include vector bundles (i.e., locally free sheaves).
We take  $\fg=D^b(\coh (X))$ to be the category of bounded complexes of
coherent sheaves. 
\end{example}

\subsection{The definition.}

Recall that if $S$ is a commutative ring, then the {\em Jacobson
radical} $\Jac (S)$ is defined to be the intersection of the 
maximal ideals. It is convenient to use the following well-known 
characterization. 

\begin{lemma}
We have $x \in \Jac (S)$ if and only if $1-xy$ is a unit for all
$y \in S$. \qed
\end{lemma}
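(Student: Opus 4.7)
The plan is to prove the two implications by playing off the defining property of maximal ideals against the existence of a unit of the form $1-xy$. This is a standard commutative algebra exercise, so the main work is really just to record the two short arguments cleanly.

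For the forward direction, I would assume $x \in \Jac(S)$ and aim to show $1-xy$ is a unit for every $y \in S$. If not, then $1-xy$ lies in some maximal ideal $\fm$. Since $x \in \fm$ by hypothesis, also $xy \in \fm$, and therefore $1 = (1-xy) + xy \in \fm$, which is absurd.

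For the reverse direction, I would assume that $1-xy$ is a unit for every $y \in S$ and show that $x$ lies in every maximal ideal. If $x \notin \fm$ for some maximal ideal $\fm$, then by maximality the ideal $(x) + \fm$ equals $S$, so I can write $1 = xy + m$ for some $y \in S$ and $m \in \fm$. But then $m = 1-xy$ is simultaneously a unit and an element of a maximal ideal, which is the desired contradiction.

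Neither direction poses a real obstacle; the only minor point to be careful about is that the ``unit'' hypothesis is universally quantified over $y$, which is exactly what one needs to use in the second direction. The argument is self-contained and uses nothing beyond the definition of the Jacobson radical and the fact that proper ideals avoid $1$.
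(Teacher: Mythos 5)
Your proof is correct; this is the standard argument, and indeed the paper states the lemma as a well-known fact and omits the proof entirely (the \qed is attached to the statement). Both directions are handled properly — in particular you correctly use the full strength of the universally quantified hypothesis in the reverse direction, and the only implicit ingredient is that every non-unit lies in some maximal ideal.
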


Finally we may give the definition. 

\begin{defn}
We say that a triangulated category $\cT$ with a subcategory $\fg$ is 
{\em centrally ci (zci)} if there are elements $\chi_1, \ldots , \chi_c$
in $\Jac (Z\cT)$ so that for all objects $X$ of $\fg$, the Koszul complex
$K(X; \chivec)$ is small.  The smallest such $c$ is called the {\em codimension}
of $\cT$.
\end{defn}

First, we note that the definitions are arranged so that it is a 
tautology that if $\cT$ is h-regular, it is also zci of
codimension 0.  We refer to zci categories of codimension $1$ as 
{\em z-hypersurface} categories.

\section{Statement of results.}
\label{sec:statements}

For the rest of the present paper we restrict attention to the 
category $\cT=D(R)$ for a Noetherian local ring $R$, and $\fg$
denotes the subcategory of bounded complexes with finitely generated
homology.

\subsection{Complete intersections.}
In commutative algebra there are three styles for a definition of
a complete intersection ring: ideal theoretic, in terms of the growth of 
the Ext algebra and a derived version. 

\begin{defn}
(i) A local Noetherian ring $R$ is a {\em complete intersection (ci)} 
ring if its completion is of the form $Q/(f_1,f_2, \ldots , f_c)$ for
some regular ring $Q$ and some regular sequence $f_1, f_2, \ldots , f_c$.
The minimum such $c$ (over all $Q$ and regular sequences) is called
the {\em codimension} of $R$.

(ii) A local Noetherian ring $R$ is  {\em gci}  
if $\Ext_R^*(k,k)$ has polynomial growth. The  {\em g-codimension}
of $R$ is one more than the degree of the growth.

(iii) A local Noetherian ring $R$ is  {\em zci} if 
there are elements $z_1,z_2,\ldots z_c\in \Jac (ZD(R))$ so that 
$M/z_1/z_2/ \cdots /z_c$ is small for all complexes $M$ in $\fg$. 
The minimum such $c$ is called the  {\em z-codimension} of $R$.

\end{defn}

\begin{remark}
To avoid confusion we should comment on a trivial but awkward detail 
of terminology. 

Since the gci condition is insensitive to completion, 
the completion in the ci definition is necessary for 
the conditions to be equivalent for incomplete rings. It is also 
completely standard.  
On the other hand, it is inconvenient that there is no simple terminology 
for a local ring of the form $Q/(f_1, \ldots, f_c)$ for a regular ring 
$Q$ and a regular 
sequence $f_1,\ldots ,f_c$, and that the algebraic geometry terminology
has been subverted in commutative algebra. We have chosen to let the 
zci terminology correspond to the algebraic geometry conventions.
\end{remark}

The main result of the present paper is the following.

\begin{thm}
For a complete Noetherian local ring $R$, the ci condition, the gci condition and 
the zci condition are equivalent. The notions of codimension also agree. 
\end{thm}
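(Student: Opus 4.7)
The plan is to prove the triple equivalence by cycling through three implications: ci $\Rightarrow$ zci (the substance of Sections \ref{sec:hypersurfaces1}--\ref{sec:ciiszci}), zci $\Rightarrow$ gci (Section \ref{sec:zciisgci}), and gci $\Rightarrow$ ci, which is Gulliksen's classical theorem \cite{Gulliksen:1971a} cited in the introduction. Tracking the codimension constants through each implication will give the agreement of the three numerical invariants, since we will obtain the chain of inequalities $z\text{-codim} \leq \text{ci-codim}$, $g\text{-codim} \leq z\text{-codim}$, and $\text{ci-codim} \leq g\text{-codim}$.

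For ci $\Rightarrow$ zci, I would start from a regular presentation $R = Q/(f_1,\ldots,f_c)$ and produce central operators $\chi_1,\ldots,\chi_c$ of cohomological degree $2$ in $Z D(R)$ associated to the $f_i$. The natural candidates are Eisenbud-type operators obtained by lifting multiplication by $f_i$ on a $Q$-free resolution of $R$ to a null-homotopy, which then descends to a natural self-map of the identity functor on $D(R)$. Since these have positive cohomological degree and $R$ is local, they lie automatically in $\Jac(Z D(R))$. The heart of the argument is then an induction on $c$: in the hypersurface case $R = Q/f$, killing the associated $\chi$ converts the eventually $2$-periodic resolution of a finitely generated module into a bounded complex over $R$, so $M/\chi$ is small for every $M \in \fg$. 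Peeling off the $f_i$ one at a time using the Avramov--Gulliksen multiperiodic resolutions then shows that $K(M;\chivec)$ is small for every $M \in \fg$, giving $z\text{-codim} \leq c$.

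For zci $\Rightarrow$ gci, I would specialize to $M = k$ and exploit that $K(k;\chivec)$ is small, hence $\Ext_R^*(K(k;\chivec),k)$ is finite-dimensional. Each mapping cone triangle $\susp{a}X \stackrel{\chi}{\lra} X \lra X/\chi$ yields a long exact sequence in which the connecting map is multiplication by $\chi$, so iterating over the $c$ operators makes $\Ext_R^*(k,k)$ into a finitely generated module over the polynomial subring $k[\chi_1,\ldots,\chi_c] \subseteq \Ext_R^*(k,k)$ generated by (the action of) the central classes. A finitely generated module over a polynomial ring in $c$ variables has Hilbert growth of degree at most $c-1$, giving $g\text{-codim} \leq c$.

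The main obstacle is the first implication: it is not obvious that the chain-level Eisenbud operators assemble into honest elements of the graded centre $Z D(R)$ (they are naturally defined only up to homotopy and only on individual complexes), and one must verify simultaneously that they produce a small Koszul complex for \emph{every} $M \in \fg$ rather than just for $M = k$. This requires an inductive argument, over the length of the regular sequence, that carries along both the centrality and the smallness of the Koszul complex, and it is where the bulk of the work in Sections \ref{sec:hypersurfaces1} and \ref{sec:ciiszci} will go.
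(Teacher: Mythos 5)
Your three-implication cycle (ci $\Rightarrow$ zci $\Rightarrow$ gci $\Rightarrow$ ci, the last by Gulliksen) is exactly the paper's strategy, and your treatment of ci $\Rightarrow$ zci matches the paper's: the operators are Gulliksen--Eisenbud classes of codegree $2$, they lie in $\Jac(ZD(R))$ automatically because they have nonzero degree (the paper proves this as a separate proposition), centrality is obtained by lifting the construction to Hochschild cohomology $HH^*(R|Q)$, and smallness of $K(M;\chivec)$ for every $M\in\fg$ comes from peeling off the $f_i$ one at a time through the chain $Q=Q_0\to Q_1\to\cdots\to Q_c=R$ using the eventually multiperiodic resolutions. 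The codimension bookkeeping is also the same.

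The one place you diverge is zci $\Rightarrow$ gci. You argue via finite generation of $\Ext_R^*(k,k)$ over the polynomial subring $k[\chi_1,\ldots,\chi_c]$, which is the classical Gulliksen-style route; the paper instead proves a direct growth lemma (for a triangle $\Sigma^n X\stackrel{\chi}{\lra}X\lra Y$ with $\chi\in\Jac(ZD(R))$ one has $\growth(X)\leq\growth(Y)+1$) and iterates it through the $c$ cofibre sequences after applying $\Hom_R(-,k)$. Your version is fine when the $\chi_i$ have positive even degree, but note that the zci hypothesis only puts the $\chi_i$ in $\Jac(ZD(R))$: they are permitted to have degree $0$ (or odd, or negative, degree), in which case the Hilbert-series count for a finitely generated module over $k[\chi_1,\ldots,\chi_c]$ breaks down (a degree-$0$ generator contributes no factor $1/(1-t^{d})$). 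The paper handles this by a separate Nakayama-type lemma showing degree-$0$ elements of the radical do not change the growth rate, and by phrasing the growth lemma so that it applies to operators of arbitrary nonzero degree. If you keep the polynomial-ring argument you should add the reduction to operators of nonzero degree, or switch to the direct length estimate. Apart from this, the proposal is correct.
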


The fact that ci implies zci amounts to the classical Avramov-Gulliksen
construction of eventually multiperiodic resolutions of modules over a 
ci ring. More precisely, because the construction is uniform, the construction 
can be lifted to Hochschild cohomology and hence provides elements of the 
centre of $D(R)$. This is described in Sections \ref{sec:hypersurfaces1}
and \ref{sec:ciiszci}. The fact that zci implies gci is a rather straightforward
estimate of growth given in Section \ref{sec:zciisgci}. Finally, the fact that
gci implies ci is Gulliksen's theorem.

\subsection{Hypersurfaces.}

In commutative algebra there are three styles for a definition of
a hypersurface ring: ideal theoretic, in terms of the growth of 
the Ext algebra and a derived version. In this case the second and
third styles have two variants each. 

\begin{defn}
(i) A local Noetherian ring $R$ is a {\em hypersurface} ring if $R=Q/(f)$ for
some regular ring $Q$ and non-zero element $f$.

(ii) A local Noetherian ring $R$ is a {\em g-hypersurface} ring 
if $\Ext_R^*(k,k)$ has polynomial growth of degree 
$\leq 1$. We say that $R$
 is a {\em p-hypersurface} ring if $\Ext_R^*(k,k)$ is eventually 
periodic. 

(iii) A local Noetherian ring $R$ is a {\em z-hypersurface} ring if 
there is an element $z\in \Jac (ZD(R))$ so that $M/z$ is small for all 
finitely generated modules $M$. We say $R$ is a {\em q-hypersurface} ring if 
for each finitely generated module $M$ there is a self-map 
$f=f_M: \Sigma^n M \lra M$  so that $M/f$ non-trivial and  small.
\end{defn}

\begin{lemma}
For a complete local Noetherian ring $R$, the conditions hypersurface, 
g-hypersurface, p-hypersurface 
and z-hy\-per\-sur\-face
are all equivalent. They imply the q-hypersurface condition.
\end{lemma}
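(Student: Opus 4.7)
The plan is to establish the cycle of implications
$$\text{hypersurface} \Longrightarrow \text{z-hypersurface} \Longrightarrow \text{p-hypersurface} \Longrightarrow \text{g-hypersurface} \Longrightarrow \text{hypersurface},$$
and then to deduce the q-hypersurface condition from z-hypersurface separately.

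The first implication, hypersurface $\Rightarrow$ z-hypersurface, is the codimension-one specialization of the main theorem's ``ci $\Rightarrow$ zci'' direction. I would lift the classical Avramov-Gulliksen periodicity operator for the regular element $f$ defining $R = Q/(f)$ to a class in Hochschild cohomology, producing a central element $z \in \Jac(ZD(R))$ for which every finitely generated module admits a small Koszul complex $M/z$. This is the main obstacle, and its construction is the content of Sections \ref{sec:hypersurfaces1} and \ref{sec:ciiszci}.

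For z-hypersurface $\Rightarrow$ p-hypersurface, I would specialize to $M = k$, obtaining a central $z$ of some degree $n$ with $k/z$ small, and apply $\Ext_R^*(-,k)$ to the defining triangle $\Sigma^n k \stackrel{z}\lra k \lra k/z$. Since $k/z$ is perfect and $k$ is finite dimensional, $\Ext_R^*(k/z,k)$ is finite dimensional, so the long exact sequence forces multiplication by $z$ to be eventually an isomorphism on $\Ext_R^*(k,k)$; hence this $\Ext$-algebra is eventually periodic of period $n$. The implication p-hypersurface $\Rightarrow$ g-hypersurface is immediate, since eventual periodicity bounds the Betti numbers $\dim_k \Ext_R^i(k,k)$, giving polynomial growth of degree at most one; and g-hypersurface $\Rightarrow$ hypersurface is Gulliksen's theorem applied to codimension at most one, with the regular case absorbed as the degenerate limit of $R = Q/(f)$.

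For z-hypersurface $\Rightarrow$ q-hypersurface, given the central $z$ of degree $n$, I would take $f_M := z \cdot 1_M \colon \Sigma^n M \lra M$ for each finitely generated $M$; smallness of $M/f_M = M/z$ is given, and non-triviality for $M \neq 0$ in $\fg$ follows from boundedness of the homology of $M$, since vanishing of $M/f_M$ would force $\Sigma^n M \simeq M$, which is impossible when $n > 0$. The residual case $n = 0$ would, upon taking $M = k$ in the z-hypersurface condition, exhibit $k$ as small and hence $R$ as regular; q-hypersurface then holds by taking $f_M := 0$, whose cone $M \oplus \Sigma M$ is small and non-zero for $M \neq 0$.
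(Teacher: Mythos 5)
Your proposal is correct and uses the same essential ingredients as the paper's proof, but organizes them into a different implication cycle. The paper deduces both hypersurface $\Rightarrow$ p-hypersurface and hypersurface $\Rightarrow$ z-hypersurface directly from the explicit eventually periodic resolution of Section \ref{sec:hypersurfaces1}, quotes Avramov for g-hypersurface $\Rightarrow$ hypersurface, and then closes the loop with z-hypersurface $\Rightarrow$ p-hypersurface via Remark \ref{rem:zhypersurfaceisphypersurface}, which reconstructs an eventually $2$-periodic projective resolution from the small fibre of the self-map. You instead run the single cycle hypersurface $\Rightarrow$ z $\Rightarrow$ p $\Rightarrow$ g $\Rightarrow$ hypersurface, and your z $\Rightarrow$ p step works at the level of $\Ext$ rather than resolutions: apply $\Hom_R(-,k)$ to the triangle $\Sigma^n k \to k \to k/z$ and use finiteness of $\Ext_R^*(k/z,k)$ to make $z$ eventually invertible on $\Ext_R^*(k,k)$. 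That is marginally more economical and proves exactly the p-hypersurface statement, whereas the paper's remark yields the stronger, reusable fact that every finitely generated module has an eventually periodic resolution. Your handling of z $\Rightarrow$ q is also more scrupulous than the paper's one-line appeal to the operator having non-zero degree, since you cover an arbitrary witness, including one of degree $0$. The only point to make explicit there is that a degree-$0$ element $z\in\Jac(ZD(R))$ necessarily acts as zero on $k$: it acts by a non-unit of $\End_{D(R)}(k)^0\cong k$ (otherwise $1-zy$ would annihilate $k$ for a suitable $y$, contradicting invertibility), hence by zero, so $k/z\simeq k\oplus\Sigma k$ and $k$ is a retract of the small object $k/z$; this justifies your claim that the degree-$0$ case forces $R$ to be regular.
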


\begin{proof}
 We have 
$$\mbox{hypersurface} \Rightarrow 
\mbox{p-hypersurface} \Rightarrow \mbox{g-hypersurface}; $$
the first of these is proved in Section \ref{sec:hypersurfaces1} 
 and the second is a tautology. 
Avramov \cite[8.1.1.3]{Avramov:1998a} shows that
$$\mbox{g-hypersurface} \Rightarrow \mbox{hypersurface},$$
so that the first three conditions are equivalent. 

We have
$$\text{\rm hypersurface} \Rightarrow 
\text{\rm z-hypersurface} \Rightarrow \text{\rm q-hypersurface}; $$
the first of these is proved in Section \ref{sec:hypersurfaces1} 
 and the second is clear from the construction since the operator is of
non-zero degree.  

We show  in  Remark \ref{rem:zhypersurfaceisphypersurface} 
that 
\begin{equation*}
\text{\rm z-hypersurface} \Rightarrow \text{\rm p-hypersurface}. 
\qedhere
\end{equation*}
\end{proof}

\section{The Jacobson radical of $ZD(R)$.}
\label{sec:jac}
We give partial information on the Jacobson radical of the centre of $D(R)$ 
sufficient to let us show that the derived category of a complete intersection 
is indeed zci. 

\subsection{Degree and the Jacobson radical.}
There is one easily verified condition for membership of the 
Jacobson radical.
 
\begin{prop}
\label{prop:nonzeroisjac}
If $R$ is a commutative ring then any element of $ZD(R)$ of non-zero
degree lies in the Jacobson radical of $ZD(R)$.
\end{prop}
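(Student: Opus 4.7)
The plan is to invoke the criterion of the preceding lemma: for every $\eta\in ZD(R)$, I must show that $1-\chi\eta$ is a unit in $ZD(R)$. The inverse I would write down is the formal geometric series
\[
z\;:=\;\sum_{k\ge 0}(\chi\eta)^k.
\]
Once $z$ is seen to make sense in $ZD(R)$, the identity $(1-\chi\eta)z=1$ is a telescoping triviality.

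The substantive point is convergence of the series inside $ZD(R)$. I would decompose $\eta=\sum_n\eta_n$ into homogeneous components and expand, so that $(\chi\eta)^k$ is a sum of products $\chi\eta_{n_1}\cdots\chi\eta_{n_k}$ lying in degree $ka+n_1+\cdots+n_k$. For each fixed target degree $d$, the constraint $ka+\sum_i n_i=d$ permits only finitely many tuples $(k,n_1,\ldots,n_k)$ provided the grading of $ZD(R)$ is bounded below (the case $a>0$; the case $a<0$ is handled dually). Under that hypothesis, each homogeneous component of $z$ is a finite sum, so $z$ is a well-defined element of $ZD(R)$, and multiplication by $1-\chi\eta$ yields $1$ by cancellation.

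The main obstacle is precisely the boundedness hypothesis on the grading of $ZD(R)$. This is a structural input one can either extract from the identification of the centre with Hochschild cohomology $HH^*(R)=\mathrm{Ext}^*_{R^e}(R,R)$, which is non-negatively graded, or verify directly by showing that natural transformations of sufficiently negative degree must vanish on enough objects of $D(R)$. Concretely, one starts from the observation that $\mathrm{Hom}_{D(R)}(R,R[n])=0$ for $n<0$, so any such transformation vanishes on $R$, and then propagates this vanishing along morphisms from $R$ to obtain vanishing on the localising subcategory $R$ generates. Once the boundedness of the grading is in place, the formal series converges degreewise and supplies the required inverse, confirming $\chi\in\Jac(ZD(R))$.
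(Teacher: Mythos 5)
There is a genuine gap, and it sits exactly where you locate the ``substantive point.'' Your argument stands or falls on the claim that $ZD(R)$ is concentrated in non-negative codegrees, and neither of your proposed justifications works. First, the centre is not identified with Hochschild cohomology: there is only a ring map $HH^*(R)\lra ZD(R)$ (this is all the paper uses in Section \ref{sec:jac}), and it is in general neither injective nor surjective, so the grading of $HH^*$ tells you nothing about the grading of $ZD(R)$. Second, vanishing on $R$ does \emph{not} propagate through the localising subcategory generated by $R$: if $\chi$ vanishes on the two outer terms of a triangle $Y\lra X\lra Z$, the standard conclusion is only that $\chi^2_X=0$ (each of $\chi_X\circ\chi_X$'s factors is forced through the ``wrong'' term), not that $\chi_X=0$. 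So negative-codegree elements of $ZD(R)$ need only be \emph{locally nilpotent}, with nilpotence degree growing with the length of the complex, and there is no reason for them to vanish. Without boundedness your geometric series genuinely fails to converge degreewise in the case $a<0$ (with degrees of $\eta$ bounded below, the equation $ka+\sum_i n_i=d$ has solutions for infinitely many $k$ when $a<0$), and ``handled dually'' would require the grading to be bounded \emph{above}, which is false.

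The fix is to replace global vanishing by local nilpotence, which is the paper's route: a negative-codegree $\chi$ vanishes on modules (negative Ext groups vanish), hence by induction on length is nilpotent on every bounded complex; therefore $1-\chi y$ is invertible on every bounded complex (your geometric series, but now a finite sum object by object rather than an element of $ZD(R)$), and invertibility on arbitrary complexes follows because every homology class is supported on a bounded subcomplex, so $1-\chi y$ is a homology isomorphism. A natural transformation that is an isomorphism on every object is a unit in $ZD(R)$. The positive-codegree case then reduces to this one: for $1-\chi y$ to be homogeneous of degree $0$ the relevant component of $y$ has negative codegree, hence lies in the radical by the first case. If you want to salvage your series argument as written, you would have to prove the non-negative grading of $ZD(R)$ outright, and that is a substantially stronger statement than the proposition.
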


\begin{proof}
We first prove that an element $\chi$ of negative codegree is in the Jacobson 
radical. Start by noting that $\chi$  is zero on modules $M$ since negative
Ext groups are zero.  Now we argue by induction on $l$ that $\chi$ is nilpotent
on complexes $X$ of length $l$. Indeed, if $X$ is of length $l+1$ we may take
$Y$ to be the subcomplex omitting the top nonzero entry of $X$, and we have
the short exact sequence 
$$0 \lra Y \lra X \lra \Sigma^d M \lra 0$$
where the top of $X$ is $M$ in degree $d$. The map $\chi$ induces a map 
of the resulting triangle. It is zero on $M$, and $\chi^n=0$ on $Y$ for some
$n$ by induction. Thus $\chi^n:X \lra X$ lies in a triangle with two zero 
maps, and is thus nilpotent.

 Thus for any $y \in ZD(R)$ the element
  $1-\chi y$ acts as a unit on all bounded complexes. Consider it as a map
of an arbitrary complex $Z$; since any homology class is supported on a finite
subcomplex the induced map on homology is an isomorphism, and $1-\chi y$ is
invertible on $Z$. In more detail, to see that $1-\chi y$ is surjective on 
homology, pick a homology class $\alpha \in H_*(Z)$, and then a bounded
subcomplex $Z'$ of $Z$ with $\alpha =i_*\alpha'$ where $i : Z' \lra Z$ 
is the inclusion. Since $1-\chi y $ is an isomorphism on $Z'$, we may choose
$\tilde{\alpha}'$ so that $\alpha'=(1-\chi y)(\tilde{\alpha}')$. Now
$\alpha =(1-\chi y) (i_*(\tilde{\alpha}'))$. The proof of injectivity is
similar. 

Now if $\chi $ is of positive codegree, and $1-\chi y$ is homogeneous of degree
0, then $y$ is of negative codegree and hence in the Jacobson radical by the
above argument. Thus $1-\chi y$ is a unit as required. 
\end{proof}

\subsection{Bimodules and the centre.}
 Bimodules provide a useful source of elements of  
$ZD(R)$. Indeed, if $R$ is a flat $l$-algebra, and if $X \lra Y$ is a
map of  $R$-bimodules 
over $l$ (which is to say, of modules over $R^e=R \tensor_l R$), then 
for any $R$-module $M$ we obtain a map 
$$X \tensor_R M \lra Y \tensor_R M$$
of $R$-modules, natural in $M$. 

It is convenient to package this in terms of the Hochschild cohomology 
ring 
$$HH^*(R|l)=\Ext_{R^e}^*(R,R). $$
If $l=\Z$ it is usual to omit it from the notation. 
Thus we obtain elements of the centre
from Hochschild cohomology in the form of  a ring homomorphism
$$HH^*(R|l) \lra ZD(R). $$
If $R$ is an $l$-algebra which is not flat, $R^e=R \tensor_l R$ is
taken in the derived sense, and similarly for $HH^*(R|l)$. 

Given maps
$l\lra l' \lra R$, we obtain a map $R\tensor^lR \lra R\tensor^{l'}R$
and hence a ring map $HH^*(R|l')\lra HH^*(R|l)$. In particular, we have
maps
$$R=HH^*(R|R) \lra HH^*(R|l) \lra HH^*(R|\Z)=HH^*(R).$$

\subsection{Hochschild cohomology transcended.}
\label{subsec:transcend}

It seems natural to relax the role of  Hochs\-child cohomology.
If the $R^e$-module $R$ is virtually small, this will suffice for 
most purposes. Indeed, we  may suppose $R\finbuilds_{R^e} W$
with $W$ small and non-trivial. Given any $R$-module $M$ we may 
apply the functor $\tensor_R M$ to the building process, to find
$$M=R\tensor_R M \finbuilds_R W\tensor_R M.$$
On the other hand, since
$$R^e\finbuilds_{R^e} W,$$
we conclude
$$R\tensor_l M \simeq R^e\tensor_R M\finbuilds_{R} W\tensor_R M.$$
If $M$ is small over $l$ (for example if $l$ is regular and $M$ 
is finitely generated over $l$) then  $R\tensor_l M$ is 
small, and it follows that $W\tensor_RM$ is small. 

This suggests defining $R$ to be {\em bimodule ci (bci)} if 
$R$ is a virtually small bimodule over $l$ for some regular 
ring $l$ over which $R$ is small. 

\begin{remark} The relationship between bci and qci deserves
further investigation.
If $W$ is an $R^e$-module, we may consider the 
class $V(W)$ consisting of $R$-modules $M$ so that $M\tensor_RW\simeq 0$.
This is a localizing subcategory of $D(R)$.  
For any virtually small $R^e$-module $X$ we may then 
consider the collection $I(X)$ of all localizing
subcategories of the form $V(W)$ as $W$ runs through small $R^e$-modules
finitely built by $X$. If $I(X)$ contains a localizing subcategory $V(W)$
intersecting $\fg$ in 0, then bci implies qci. 
\end{remark}

\section{Hypersurfaces.}
\label{sec:hypersurfaces1}

In this section we suppose $R$ is a hypersurface ring, so that
 we have a short exact sequence
$$0 \lra Q \stackrel{f}\lra Q \lra R \lra 0$$
of $Q$-modules for a regular local ring $Q$.
 There are two basic constructions that we need to generalize. 


\subsection{The degree 2 operator.}
\label{subsec:operator}

We describe a construction of a cohomological operator due to Gulliksen 
\cite{Gulliksen:1974a,Avramov:1998a}.

Given an $R$-module $M$ we may apply $(\cdot) \tensor_QM$ to the defining
sequence to obtain the short exact sequence
$$0 \lra \Tor_1^Q(R,M) \lra M \stackrel{f}\lra M \lra R\tensor_Q M \lra 0.$$
Since $f=0$ in any $R$-module, we conclude
$$\Tor_1^Q(R,M)\cong M, R\tensor_Q M \cong M, $$
and 
$$\Tor_i^Q(R,M)=0 \mbox{ for } i \geq 2.$$
Next, choose a free $Q$-module $F$ with an epimorphism to $M$, giving a 
short exact sequence 
$$0 \lra K \lra F \lra M \lra 0$$
of $Q$-modules. Applying $(\cdot)\tensor_QR$, we obtain 
$$0 \lra \Tor_1^Q(R,M) \lra \Kbar \lra \Fbar \lra M \lra 0.$$
Since $\Tor_1^Q(R,M) \cong M$, this gives an element 
$$\chi_f \in \Ext^2_R(M,M), $$
or a map 
$$\chi_f: M \lra \Sigma^2M$$
in the derived category of $R$-modules.

We will see in Subsection \ref{subsec:chiinHochschild}
 that this construction lifts to give an element 
$$\chi_f \in HH^2(R|Q), $$
and hence in particular that it gives an element of the centre
$ZD(R)$ of $D(R)$.

\subsection{The eventually periodic resolution.}
\label{subsec:eventperiod}
Continuing with the above case, we may show that all modules $M$ have
free resolutions over $R$ which are eventually periodic of period 2. 

Indeed, $M$ has a finite free $Q$-resolution  
$$0 \lra F_n \lra F_{n-1} \lra \cdots \lra F_1 \lra F_0 \lra M \lra 0. $$
Adding an extra zero term if necessary, we suppose for convenience that $n$ is even.
Now apply $(\cdot) \tensor_Q R$ to obtain a complex
$$0 \lra \Fbar_n \lra \Fbar_{n-1} \lra \cdots \lra \Fbar_1 \lra \Fbar_0 \lra M \lra 0. $$
Since $\Tor_i^Q(R, M)=0$ for $i \geq 2$, this is exact except in homological 
degree 1, where it is $\Tor_1^Q(R,M)\cong M$. Splicing in a second copy of the resolution, 
we obtain a complex
\[ \addtolength{\arraycolsep}{-0.4mm}
\begin{array}{ccccccccccccccccc}
0           &\lra &
\Fbar_n     &\lra & 
\Fbar_{n-1} &\lra & 
     \cdots &\lra &
\Fbar_2     &\lra &         
\Fbar_1     &\lra &
\Fbar_0     &\lra &
M           &\lra &
0 \\
\oplus      &&
\oplus      &&
\oplus      &&
      &&
\oplus      &\nearrow&
      &&
      &&
      &&\\

\Fbar_{n-1} &\lra &
\Fbar_{n-2} &\lra & 
\Fbar_{n-3} &\lra & 
     \cdots &\lra &
\Fbar_0      & &         
            &       &
            &      &
           &      &
\end{array} \]
which is exact except in the second row, in homological degree 4, 
where the homology
is again $M$. We may repeatedly splice in additional rows to 
obtain a free resolution
$$\ldots \lra G_3 \lra G_2 \lra G_1 \lra G_0 \lra M \lra 0$$
over $R$. Remembering the convention that $n$ is even, provided  the degree is 
at least $n$,  the modules in the resolution are 
\[ G_{2i}=\Fbar_{n} \oplus \Fbar_{n-2} \oplus \cdots \oplus 
\Fbar_2 \oplus \Fbar_0 \]
in even degrees and 
\[ G_{2i+1}=\Fbar_{n-1} \oplus \Fbar_{n-3} \oplus \cdots 
\oplus \Fbar_3 \oplus \Fbar_1 \]
in odd degrees. 

\subsection{Smallness.}

We may reformulate the eventual periodicity of the previous subsection in 
homotopy invariant terms. 

\begin{lemma}
If $R$ is a hypersurface $R=Q/(f)$ and $M$ is a finitely generated $R$-module
 then the mapping cone of 
$\chi_f: M \lra \Sigma^2M$ is small. 
\end{lemma}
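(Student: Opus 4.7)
The plan is to exhibit the cone of $\chi_f$ explicitly as a suspension of a bounded complex of finitely generated free $R$-modules, so that it is manifestly small. The key inputs are that $Q$ is regular and that $R$ has projective dimension $1$ over $Q$.

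Choose any finite resolution $P_{\bullet}\lra M$ of $M$ by finitely generated free $Q$-modules, which exists by regularity of $Q$. Set $\overline{P}_{\bullet}:=P_{\bullet}\tensorQ R$; this is a bounded complex of finitely generated free $R$-modules, hence a small object of $D(R)$. Its homology is $\Tor_*^Q(R,M)$. The hypersurface presentation $0\lra Q\stackrel{f}\lra Q\lra R\lra 0$ shows that $R$ has projective dimension $1$ over $Q$, so $\Tor_j^Q(R,M)=0$ for $j\geq 2$; since $f$ acts as zero on $M$, we saw in Subsection~\ref{subsec:operator} that $\Tor_0^Q(R,M)=\Tor_1^Q(R,M)=M$. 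Thus $\overline{P}_{\bullet}$ has homology $M$ concentrated in degrees $0$ and $1$, and the canonical augmentation $\overline{P}_{\bullet}\lra M$ fits into a distinguished triangle
\[
\Sigma M\lra \overline{P}_{\bullet}\lra M\stackrel{\partial}\lra \Sigma^2 M
\]
in which $\partial$ is the $k$-invariant of the two-stage Postnikov decomposition of $\overline{P}_{\bullet}$. Rotating, the cone of $\partial$ is $\Sigma\overline{P}_{\bullet}$, which is small.

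The only real point to verify is that $\partial$ coincides with $\chi_f$. This is an instance of the standard dictionary between morphisms $M\lra \Sigma^n M$ in $D(R)$ and Yoneda $n$-fold extension classes in $\Ext^n_R(M,M)$. Concretely, truncating $P_{\bullet}$ at its first step gives a short exact sequence $0\lra K\lra F\lra M\lra 0$ of the type used in Subsection~\ref{subsec:operator}, and the four-term exact sequence $0\lra M\lra \Kbar\lra \Fbar\lra M\lra 0$ defining $\chi_f$ there is precisely the Yoneda splicing of the two short exact sequences that assemble the Postnikov tower of a two-term model for $\overline{P}_{\bullet}$. That such a two-term model exists is guaranteed by the vanishing $\Tor_i^Q(R,K)\cong \Tor_{i+1}^Q(R,M)=0$ for $i\geq 1$ (from the long exact Tor sequence of $0\lra K\lra F\lra M\lra 0$ together with $\Tor_i^Q(R,F)=0$), which lets us replace $\overline{P}_{\bullet}$ in $D(R)$ by the two-term complex $[\Kbar\lra \Fbar]$ without altering the triangle above. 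Hence $\partial=\chi_f$, and the cone of $\chi_f$ is $\Sigma\overline{P}_{\bullet}$, which is small.
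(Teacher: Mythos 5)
Your argument is correct, and it is built on the same basic object as the paper's proof: the finite free $Q$-resolution of $M$ tensored down to $R$ (your $\overline{P}_{\bullet}$ is the complex $\Fbar_{\bullet}$ of Subsection~\ref{subsec:eventperiod}), which is visibly a bounded complex of finitely generated free $R$-modules and whose homology is $M$ in degrees $0$ and $1$ by the $\Tor$ computation of Subsection~\ref{subsec:operator}. Where you genuinely differ is in how the boundary map of this two-stage complex gets identified with $\chi_f$. The paper splices copies of $\Fbar_{\bullet}$ to manufacture the eventually $2$-periodic free resolution $G_{\bullet}$ and reads the triangle $\Sigma^{1}M/\chi \lra M \lra \Sigma^2 M$ off the short exact sequence of chain complexes $0 \lra \Fbar_{\bullet} \lra G_{\bullet} \lra \Sigma^2 G_{\bullet} \lra 0$. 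You bypass the splicing entirely: you take the Postnikov (truncation) triangle of $\overline{P}_{\bullet}$, observe that its cone is $\Sigma\overline{P}_{\bullet}$, and match the $k$-invariant with $\chi_f$ via the dictionary between $k$-invariants of two-stage complexes and Yoneda $2$-extensions, using the reduction to the two-term model $[\Kbar \lra \Fbar]$ (justified by $\Tor_i^Q(R,K)=0$ for $i\geq 1$) to land exactly on the four-term sequence that defines $\chi_f$. Your route is the more economical one for this lemma, since it never needs the infinite resolution; what the paper's route buys is the explicit eventually periodic resolution itself, which is reused elsewhere (in Remark~\ref{rem:zhypersurfaceisphypersurface} and as the template for the iterated construction for general complete intersections in Section~\ref{sec:ciiszci}). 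Both arguments ultimately rest on the same Yoneda identification, which in each case is asserted rather than checked sign-by-sign; that is harmless here since smallness of the cone is insensitive to such a sign.
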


\begin{proof}
From the Yoneda interpretation, we notice that 
$$\chi_f : M \lra \Sigma^2M$$
is realized by the quotient map factoring out the first row subcomplex
$$0 \lra \Fbar_n \lra \Fbar_{n-1} \lra \cdots \lra \Fbar_1 \lra \Fbar_0\lra 0. $$
Thus the short exact sequence
$$0 \lra \Fbar_{\bullet} \lra G_{\bullet} \lra \Sigma^2 G_{\bullet} \lra 0$$
of $R$-free chain complexes realizes the triangle
\begin{equation*}
\Sigma^{1}M/\chi \lra M \lra \Sigma^2M.
\qedhere
\end{equation*}
\end{proof}

\begin{remark}
\label{rem:zhypersurfaceisphypersurface}
Note that if there is a map $\chi : M \lra \Sigma^2M$ whose fibre $F$ 
is small, then we may form an eventually 2-periodic resolution of $M$. Indeed, 
we may suppose $F$ is a finite complex of finitely generated projectives, 
and then realize the composite $\Sigma F \lra \Sigma M \lra F$
by a map $\delta $ of complexes. The double complex formed from 
$\bigoplus_{n \geq 0}\Sigma^n F$ by using $\delta$ to relate the 
successive copies of $F$ is equivalent to $M$. It is 2-periodic once $n$ 
is larger than the span of $F$. 
\end{remark}

\subsection{Lifting to Hochschild cohomology.}
\label{subsec:chiinHochschild}
We now explain how the degree 2 operators can be lifted to 
Hochschild cohomology classes, working here in the underived context.
We assume that $Q$ and $R$ 
are flat $l$-algebras, and use the standard notation 
$Q^e=Q \tensor_l Q, R^e=R \tensor_l R,$ and 
the formula 
$$HH^*(R|l)=\Ext_{R^e}^*(R,R)$$
for the  Hochschild cohomology. 

Next, note that there is an exact sequence
$$0 \lra Q\tensor_l R \stackrel{f\tensor 1 }\lra  Q\tensor_l R \lra R^e \lra 0$$
so that 
$$\Tor_1^{Q\tensor_l R}(M,R^e)\cong M. $$
Now define $J$ by the exact sequence
\begin{equation}
\label{Jeqn}
0 \lra J \lra Q\tensor_l R \lra R \lra 0
\end{equation}
of $(Q,R)$-bimodules. We obtain the required relationship by 
forming 
$$R \tensor_Q (\ref{Jeqn}) \tensor_R M$$
in two different ways. 

First we apply $R \tensor_Q (\cdot)$ 
to obtain an exact sequence
\begin{equation}
\label{RJeqn}
0 \lra R \lra \Jbar \lra R^e \lra R \lra 0 
\end{equation}
of $R^e$-modules: this is the element $\chi_f \in HH^2(R)$.

To see that $\chi_f$ gives the required endomorphism of the identity 
functor on $D(R)$ we proceed as follows. For any $R$-module $M$, we can apply 
$(\cdot ) \tensor_R M$ to Equation (\ref{RJeqn}) to obtain an exact sequence
 $$0 \lra M \lra \Jbarbar \lra R \tensor_lM \lra M \lra 0 .$$
It remains to observe that this can also be obtained from 
an exact sequence of $Q$-modules by tensoring down. 
However for this we need only apply $(\cdot )\tensor_R M$ 
to Equation (\ref{Jeqn}) to obtain 
$$0 \lra \Jbar \tensor_R M \lra Q \tensor_lM \lra M \lra 0, $$
which qualifies as a sequence
$$0 \lra K \lra F \lra M \lra 0$$
as in Subsection \ref{subsec:operator}.

\section{General complete intersections.}
\label{sec:ciiszci}
Building on the hypersurface case, we can deal with general complete
intersections. In the general case, we suppose $R=Q/(f_1, \ldots ,f_c)$ where
$Q$ is a regular local ring and $f_1, f_2, \ldots, f_c$ is a regular sequence.

As a matter of notation, let $Q=Q_0$, and $Q_i=Q/(f_1, \ldots, f_i)$, so 
that $Q_c=R$  and we have a sequence of rings
$$Q=Q_0 \lra Q_1\lra \cdots \lra Q_c=R. $$
The contents of Subsection \ref{subsec:operator} only required
$f$ to be regular and made no requirement on $Q$. We deduce
that for any $Q_{s+1}$-module $M_{s+1}$ we have
$$\Tor_i^{Q_s}(Q_{s+1},M_{s+1})=
\trichotomy
{M_{s+1} \mbox{ if } i=0}
{M_{s+1} \mbox{ if } i=1}
{0 \mbox{ if } i\geq 2}.$$
We have an operator $\chi_j=\chi_{f_j}$ on $Q_j$-modules. 

\begin{prop}
For any finitely generated $R$-module $M$ the 
 iterated mapping cone 
$$ M/\chi_c/\chi_{c-1}/\cdots /\chi_1$$
is finitely built from $M$ and small over $R$. 
\end{prop}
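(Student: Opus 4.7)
The plan is to argue by induction on the codimension $c$, using the hypersurface analysis of Section \ref{sec:hypersurfaces1} to handle the outermost operator $\chi_c$ and applying the inductive hypothesis to the codimension-$(c-1)$ ring $Q_{c-1}$, which is itself a complete intersection in $Q$.

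For $c=0$, the ring $R=Q$ is regular, so every finitely generated $R$-module is already small and there is nothing to prove.

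For the inductive step, the key identity is
\[ N/\chi_c \;\simeq\; \Sigma^{-1}\bigl( N \otimes^L_{Q_{c-1}} R \bigr) \qquad\text{in } D(R), \]
valid for any $R$-module $N$. This is exactly the content of the hypersurface construction of Section \ref{sec:hypersurfaces1} applied to the pair $(Q_{c-1},f_c)$, using the observation at the start of the present section that Subsection \ref{subsec:operator} only needs $f_c$ regular, not $Q_{c-1}$ regular (the identification of the cone with the derived base change survives even when the resolution is infinite). Combined with a naturality statement for the lower operators $\chi_j$ ($j<c$) under base change,
\[ (N \otimes^L_{Q_{c-1}} R)/\chi_j \;\simeq\; (N/\chi_j) \otimes^L_{Q_{c-1}} R, \]
iterating gives
\[ M/\chi_c/\chi_{c-1}/\cdots/\chi_1 \;\simeq\; \Sigma^{-1}\bigl( (M/\chi_{c-1}/\cdots/\chi_1) \otimes^L_{Q_{c-1}} R \bigr). \]
By the inductive hypothesis applied to $Q_{c-1}$, the complex $M/\chi_{c-1}/\cdots/\chi_1$ is small over $Q_{c-1}$ (and finitely built from $M$). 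Since derived base change $(-)\otimes^L_{Q_{c-1}} R$ is a tensor-style left adjoint sending $Q_{c-1}$ to $R$, it preserves small objects, so the whole iterated cone is small over $R$. Finite-builtness from $M$ over $R$ is automatic from the cone construction.

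The main obstacle is establishing the naturality of $\chi_j$ under the map $Q_{c-1}\to R$: one must confirm that the Hochschild class $\chi_j\in HH^2(Q_j|Q_{j-1})$, when pushed to $ZD(R)$, acts on an induced module $N \otimes^L_{Q_{c-1}} R$ as the base change of its action on $N$ over $Q_{c-1}$. This is functoriality of the bimodule-resolution construction of Subsection \ref{subsec:chiinHochschild} under the tower of rings, and it requires tracking the explicit short exact sequence of $(Q_{j-1},Q_j)$-bimodules together with the compatible maps induced by $Q_j\to Q_{c-1}\to R$. A secondary nuisance is the ordering of operators in the iterated cone, which is benign because all $\chi_j$ lie in the graded-commutative ring $ZD(R)$ and the preceding lemma shows the cone is independent of the order.
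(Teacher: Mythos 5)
Your argument is correct and runs on exactly the same engine as the paper's proof: the hypersurface identity $N/\chi_{f_{s+1}}\simeq \Sigma^{-1}(N\otimes^{L}_{Q_{s}}Q_{s+1})$, iterated through the tower $Q=Q_0\to Q_1\to\cdots\to Q_c=R$, with the final answer $M/\chi_c/\cdots/\chi_1\simeq\Sigma^{-c}(M\otimes^{L}_{Q}R)$, a finite complex of finitely generated free $R$-modules. The difference is organizational: the paper runs the iteration bottom-up, explicitly splicing the multiperiodic resolutions so that at stage $s$ one sees $M/\chi_s/\cdots/\chi_1\simeq\Sigma^{-s}(F_\bullet\otimes_{Q_0}Q_s)$ at the chain level, whereas you induct top-down on the codimension, peeling off $\chi_c$ and commuting the remaining operators past the base change $(-)\otimes^{L}_{Q_{c-1}}R$. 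Your version therefore must state the base-change compatibility of the $\chi_j$ as a separate lemma; you correctly identify this as the point needing verification, and it does hold by the bimodule (Hochschild) description of the operators, by associativity of the tensor products. In the paper this compatibility is absorbed into the explicit resolutions, where the chain map realizing $\chi_j$ on the tensored-down complex is literally the tensor-down of the chain map realizing $\chi_j$ one stage earlier. What your route buys is a cleaner inductive structure and an explicit reduction to the codimension-$(c-1)$ case; what the paper's buys is that no abstract naturality statement need be formulated, at the cost of more explicit bookkeeping with spliced resolutions.
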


\begin{proof}
In the hypersurface case, we constructed an explicit eventually 
periodic resolution. That is to say, in large degrees the resolution
is obtained by splicing together a fixed resolution. 
In general the analogue of a periodic resolution is a multigraded
resolution obtained by stacking boxes. For complete intersections, 
 one may build an explicit, resolution which is multigraded and
eventually obtained by stacking boxes. 

Suppose $M$ is an $R$-module. We begin with the construction of Subsection 
\ref{subsec:eventperiod}. This gives a $Q_0$-resolution
$$0 \lra F_n \lra F_{n-1} \lra \cdots \lra F_1 \lra F_0 \lra M \lra 0$$
of $M$, which we tensor down to give a $Q_1$-complex
$$0 \lra \Fbar_n \lra \Fbar_{n-1} \lra \cdots \lra \Fbar_1 \lra \Fbar_0 \lra M \lra 0 .$$
 This is not exact, so we splice together copies until we
obtain a free $Q_1$-resolution 
$$ \cdots \lra G_2 \lra G_1 \lra G_0 \lra M \lra 0 ,$$
and there is a short exact sequence of $Q_1$-chain complexes
$$\Fbar_{\bullet} \lra G_{\bullet} \lra \Sigma^2G_{\bullet} $$
realizing
$$\Sigma^{1} M/\chi_1 \lra M \lra \Sigma^2 M .$$
Thus 
$$M/\chi_1 \simeq \Sigma^{-1}\Fbar_{\bullet}$$ 
is $Q_1$-small.

We may repeat the process, applying
$(\cdot)\tensor_{Q_1}Q_2$ to give a $Q_2$-complex
$$ \cdots \lra \Gbar_2 \lra \Gbar_1 \lra \Gbar_0 \lra M \lra 0 .$$
 Since $\Tor_i^{Q_1}(Q_2,M)=0$ for $i \geq 2$
it is exact except in degree 2, where it is $\Tor_1^{Q_1}(Q_2,M)\cong M$, 
and we may splice together complexes to give a free $Q_2$-resolution 
$$ \cdots \lra H_2 \lra H_1 \lra H_0 \lra M \lra 0 .$$
There is a short exact sequence of chain complexes
$$\Gbar_{\bullet} \lra H_{\bullet} \lra \Sigma^2H_{\bullet} $$
realizing
$$\Sigma^{1} M/\chi_2 \lra M \lra \Sigma^2 M .$$
Thus 
$$M/\chi_2 \simeq \Sigma^{-1} \Gbar_{\bullet},$$
and therefore 
$$M/\chi_2/\chi_1 \simeq \Sigma^{-2} \Fbarbar_{\bullet}$$
is $Q_2$-small. 

Repeating this process $c$ times, we obtain the desired conclusion.
\end{proof}

\section{Growth conditions.}
\label{sec:zciisgci}
Throughout algebra and topology it is common to use the rate of growth of
homology groups as a measurement of complexity. In this section we show
that if $R$ satisfies the zci condition then its 
Ext algebra has polynomial growth (i.e., $R$ satisfies the gci condition).
Gulliksen's theorem then completes the proof of equivalence of the 
characterizations of complete intersections. Because of later applications
we will prove results in greater generality than we require here.

\subsection{Polynomial growth.}

Often, one knows that 
the homology groups will be zero in negative degrees, so the growth 
condition will be on the positive homology groups. Similarly, if homology 
 groups are zero in positive degrees,  the growth 
condition will be on the negative homology groups. For us it is convenient
to treat positive and negative degrees symmetrically. 

To continue, suppose that $H$ is a Noetherian graded ring and
that $M$ is an $H$-module (we have in mind the case the $H$ is the homology 
of a DGA, and $M$ is the homology of a DG-module over it). 
If $H_0$ is local, we write $\len (M_i)$ 
for the length of $M_i$. To extend the notion to more general 
rings we impose a condition to give a reasonable notion of length.

\begin{defn}
(i) An $H$-module $M$ is {\em locally finite} if $M$ is a Noetherian 
$H$-module and
 $\len ((M_i)_{\fm_0})$ is finite for all $i$ and all maximal ideals $\fm_0$ 
of $H_0$.

(ii) If $M$ is locally finite then since it is Noetherian, 
it has finitely many associated primes $\wp$, and we may write
$$\len (M_i)=\sum_{\wp\in \ass (M)}\len((M_i)_{\wp_0}).$$
\end{defn}

Assembling these we obtain a notion of growth.

\begin{defn}
We say that a locally finite module $M$ has polynomial growth of 
degree $\leq d$ and write
$\growth (M) \leq d$ if 
there is a polynomial $p(x)$ of degree $d$ with  
$$\len (X_n)\leq p(|n|)$$
for almost all $n$.
\end{defn}

\begin{remark}
A bounded module has growth $\leq -1$. For
modules with growth $d\geq 0$, by adding a constant to the
polynomial, we may insist that the
bound applies for all $n \geq 0$.
\end{remark}

\subsection{Degree 0 elements are redundant.}
We show that degree 0 elements of the Jacobian do 
not reduce the growth rate. 

\begin{lemma}
\label{lem:degzeronugatory}
If $x \in \Jac (H)$ and $|x|=0$ then for any locally finite $H$-module $M$ 
$$\growth (M/xM)=\growth (M).$$
\end{lemma}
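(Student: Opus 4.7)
The plan is to establish the two inequalities $\growth(M/xM) \leq \growth(M)$ and $\growth(M) \leq \growth(M/xM)$ separately. The first is essentially immediate: since $|x|=0$, the operator $x$ sends each graded piece $M_n$ into itself, so $(M/xM)_n = M_n/xM_n$ is a quotient of $M_n$. Localization at any maximal ideal of $H_0$ preserves this quotient relationship, so $\len\bigl((M/xM)_n\bigr) \leq \len(M_n)$, which gives $\growth(M/xM) \leq \growth(M)$.

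For the reverse inequality, the idea is to reduce to the case where $x$ acts nilpotently on $M$. Form the ascending chain $\ker(x) \subseteq \ker(x^2) \subseteq \cdots$ of graded submodules of $M$; since $M$ is Noetherian, this chain stabilizes at some $K_\infty := \ker(x^{k_0})$. Set $M' := M/K_\infty$; then multiplication by $x$ is injective on $M'$ (if $xm \in K_\infty$ then $x^{k_0+1}m=0$, so $m \in K_\infty$ by stabilization). The claim is that $M'=0$. First, $x \in \Jac(H)$ forces $x \in \Jac(H_0)$: given $y_0 \in H_0$, the unit $1-xy_0$ in $H$ has an inverse whose degree $0$ part is its inverse in $H_0$. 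Consequently $x/1$ lies in the Jacobson radical of $A := (H_0)_{\fm_0}$ for every maximal ideal $\fm_0$ of $H_0$. The degree-preserving action of $x/1$ on the finite-length $A$-module $N'_n := (M'_n)_{\fm_0}$ is injective, hence surjective by length count, so $(x/1) N'_n = N'_n$. Nakayama's lemma then gives $N'_n = 0$. Since $M'_n$ is Noetherian and locally finite, and hence supported only at maximal ideals of $H_0$, vanishing at every $\fm_0$ forces $M'_n = 0$ for all $n$. Thus $M = K_\infty$, i.e., $x^{k_0}M = 0$.

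With nilpotency in hand, filter
\[ M = x^0 M \supseteq xM \supseteq x^2 M \supseteq \cdots \supseteq x^{k_0}M = 0. \]
Multiplication by $x^i$ defines a map $M \to x^iM/x^{i+1}M$ that sends $xM$ into $x^{i+1}M$, yielding a surjection $M/xM \twoheadrightarrow x^iM/x^{i+1}M$. Hence $\len\bigl((x^iM/x^{i+1}M)_n\bigr) \leq \len\bigl((M/xM)_n\bigr)$ at each maximal ideal of $H_0$. Summing over the $k_0$ filtration steps using additivity of length on short exact sequences of finite-length localized modules yields $\len(M_n) \leq k_0 \cdot \len\bigl((M/xM)_n\bigr)$, uniformly in $n$. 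Therefore $\growth(M) \leq \growth(M/xM)$, completing the proof.

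The main obstacle is the argument that $M' = 0$, where one must simultaneously exploit the degree-$0$ nature of $x$ (so that the action respects each graded piece), the finite length of each localized graded piece (so that injective implies surjective), and the descent of Jacobson radical membership from $H$ to each local ring $(H_0)_{\fm_0}$. Once the reduction to $x$ nilpotent is achieved, the filtration argument with its uniform bound $k_0$ is mechanical.
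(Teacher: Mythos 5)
Your proof is correct, and it reaches the key quantitative bound by a somewhat different route than the paper. Both arguments ultimately rest on producing a constant $C$, independent of $n$, with $\len(M_n)\leq C\cdot\len\bigl((M/xM)_n\bigr)$. The paper gets its constant by reducing to the case that $H_0$ is local with maximal ideal $\fm_0$, observing that $\ann(M)_0$ is $\fm_0$-primary so that $\fm_0^N M=0$ for some $N$, and then chaining $\len(M_n)\leq\len(H_0/\fm_0^N)\cdot\len(M_n/\fm_0 M_n)\leq\len(H_0/\fm_0^N)\cdot\len(M_n/xM_n)$, the last step using $x\in\fm_0$. You instead prove that $x$ itself is nilpotent on $M$ --- via stabilization of the chain $\ker(x)\subseteq\ker(x^2)\subseteq\cdots$, the observation that an injective degree-preserving endomorphism of a finite-length graded piece is surjective, and Nakayama at each maximal ideal of $H_0$ --- and then filter $M$ by powers of $x$, getting the constant $C=k_0$ from the surjections $M/xM\twoheadrightarrow x^iM/x^{i+1}M$. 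Your nilpotency step is in fact implied by the paper's (since $x^N\in\fm_0^N$ annihilates $M$), but your derivation of it is self-contained and avoids the slightly delicate claim that $\ann(M)_0$ is $\fm_0$-primary; in exchange the paper's version is shorter once that claim is granted. You also spell out the easy inequality $\growth(M/xM)\leq\growth(M)$, which the paper leaves implicit. The only point worth flagging is that the paper's $\len$ is defined as a sum over localizations at the (finitely many) primes $\wp_0$ coming from associated primes, so one should note that $M$ and $M/xM$ have the same support over $H_0$ (which follows from Nakayama, since $x$ lies in each relevant $\fm_0$); both your write-up and the paper's pass over this silently, so it is not a defect relative to the paper's standard of rigor.
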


\begin{proof}
Since $M$ is Noetherian it has finitely many associated primes $\wp$. 
If $\wp$ is an associated prime, 
the degree 0 part $\wp_0$ is maximal in $H_0$, since $\len (M_i) <\infty$.  
Altogether, we suppose $\fm_0^1, \ldots , \fm_0^S$ are the finitely many 
maximal primes of $H_0$ occurring in this way. We thus have
$$\len (M_i)=\sum_s \len ((M_i)_{\fm_0^s}), $$
and hence
$$\growth (M_i) =\max\{ \growth( (M)_{\fm_0^s})\st s=1,\ldots , S\} $$
In short, we may assume that $R_0$ is local, and we write $\fm_0$ for its
maximal ideal.

Next, since $\ann (M)$ is finitely generated, $\ann (M)_0$ is $\fm_0$-primary, 
and therefore there is an $N$ so that $\fm_0^N \cdot M_i=0$ for all $i$.

Now 
$$\len (M_i)\leq \len (\fm_0/\fm_0^N)\cdot \len (M_i/\fm_0M_i), $$
and hence
$$\len (M_i/xM_i)\geq \len (M_i/\fm_0 M_i)\geq \frac{\len (M_i)}
{\len(\fm_0/\fm_0^N)}. $$
This shows $\growth (M/xM)\geq \growth (M)$ and completes the proof. 
\end{proof}

\subsection{Mapping cones and growth rate.}
We now turn to the special case when we have a DGA $R$, and
a DG-module $X$ over $R$, and we take $H=H_*(R)$ and 
consider the growth of $M=H_*(X)$. Of course, we are concentrating 
on the special case
when $R$ is itself concentrated in a single degree, so that
this means $H=R$ and $X$ is nothing but a chain complex.

\begin{defn}
We say that $X$ is {\em locally finite} over $R$ if $H_*(X)$ is 
locally finite over $H_*(R)$, 
and that $X$ has growth $\leq d$ if this is true for $H_*(X)$. 
\end{defn}

The key link between the zci condition and growth is the following
result. 
\begin{lemma}
Given bounded below locally finite modules $X,Y$ in a triangle 
$$\Sigma^n X \stackrel{\chi}\lra X \lra Y$$
with $\chi \in \Jac (ZD(R))$
$$\growth (X) \leq \growth (Y)+1.$$
\end{lemma}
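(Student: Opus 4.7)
The plan is to extract the desired bound from the long exact sequence in homology induced by the triangle, namely
$$\cdots \lra H_{i-n}(X) \stackrel{\chi_*}\lra H_i(X) \lra H_i(Y) \lra H_{i-n-1}(X) \stackrel{\chi_*}\lra H_{i-1}(X) \lra \cdots$$
Writing $M = H_*(X)$ and $N = H_*(Y)$, I split the argument by the degree $n$ of $\chi$. In the degree zero case, exactness at $H_i(X)$ gives an injection $M_i/\chi M_i \hookrightarrow N_i$, so $\growth(M/\chi M) \leq \growth(Y)$; since $\chi \in \Jac(ZD(R))$ has degree $0$, Lemma \ref{lem:degzeronugatory} then yields $\growth(M) = \growth(M/\chi M) \leq \growth(Y) \leq \growth(Y) + 1$.

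For $n \neq 0$ I do the work in the long exact sequence directly. Assume first $n > 0$. Exactness at $H_i(X)$ together with the short exact sequence $0 \lra \chi M_{i-n} \lra M_i \lra M_i/\chi M_{i-n} \lra 0$ gives the recurrence
$$\len(M_i) \leq \len(N_i) + \len(M_{i-n}).$$
Since $X$ is bounded below, say $M_j = 0$ for $j < N_0$, iterating this recurrence downward in steps of $n$ terminates after $K = O(|i|/n)$ steps and produces $\len(M_i) \leq \sum_{k=0}^{K} \len(N_{i-kn})$. With $\growth(Y) \leq d$, each summand is bounded by a polynomial of degree $d$ in $|i|$, so summing $O(|i|)$ such terms yields a polynomial bound of degree $d+1$, exactly as required. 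The case $n < 0$ is symmetric: exactness at $H_{i-n-1}(X)$ yields the recurrence $\len(M_j) \leq \len(N_{j+n+1}) + \len(M_{j+n})$ with $j+n < j$, and the same iteration again bottoms out by the bounded-below hypothesis.

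The main obstacle is bookkeeping rather than substance: one must select the correct half of the long exact sequence in each sign case and verify that iterating the recurrence $O(|i|)$ times against a degree-$d$ polynomial really does produce the claimed degree-$d+1$ bound, using that $|i-kn|$ varies in a window whose width is controlled by $|N_0|$. The degree zero case stands aside and is handled cleanly by invoking Lemma \ref{lem:degzeronugatory}, whose content is precisely that killing a degree-zero element of the Jacobson radical cannot reduce growth.
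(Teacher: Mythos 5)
Your proof is correct and follows the same basic line as the paper's: the homology long exact sequence of the triangle, the recurrence $\len(H_i(X))\leq \len(H_i(Y))+\len(H_{i-n}(X))$ (or its mirror taken at $H_{i-n-1}(X)$ when $n<0$), iteration of that recurrence, and Lemma \ref{lem:degzeronugatory} to dispose of the degree-zero case. The one substantive divergence is in how the iteration is terminated. You terminate it using the bounded-below hypothesis, with the consequence that for $n\neq 0$ you never use $\chi\in\Jac(ZD(R))$ at all. The paper instead keeps track of the sharper tail term $\len(\chi^s H_{i-sn}(X))$ and kills it by Nakayama's lemma: since $\chi$ lies in the Jacobson radical and each $H_i(X)$ has finite length, the descending chain $\chi^s H_i(X)$ stabilises at zero. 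That version of the termination works for locally finite modules that are not bounded below, and the extra generality is not idle: in the application (zci implies gci) the complexes $X_i=\Hom_R(k/\chi_c/\cdots/\chi_i,k)$ are bounded \emph{above} rather than below, so it is really the Nakayama form of the argument that gets invoked. Your argument proves the lemma exactly as stated; the paper's proof establishes the version it actually needs later. One small bookkeeping point you elide: when $H_0(R)$ is not local, the paper first localises at the finitely many maximal ideals coming from associated primes, so that $\len$ is an honest length and is monotone and subadditive on the exact sequences you use; this reduction (or some substitute for it) should be stated before manipulating lengths.
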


\begin{proof}
By the arguments of the previous subsection, we may assume $n\neq 0$. 
Indeed if $\chi$ has degree 0 we have an exact sequence
$$0\lra H_*(X)/\chi \lra H_*(Y) \lra H_*(X), $$
so that Lemma \ref{lem:degzeronugatory} shows that 
$$\growth (X)=\growth (Y). $$

Next, since $X$ and $Y$ are locally finite, the homology modules
$H_*(X)$ and $H_*(Y)$  have only 
finitely many associated primes
and it suffices to work with one maximal ideal at a time. Accordingly
we may localize and it suffices to deal with the case that 
$H_0(R)$ is a local ring. 

The homology long exact sequence of the triangle includes 
$$\cdots \lra H_{i-n}(X) \stackrel{\chi}\lra 
H_i(X) \lra H_i(Y)\lra \cdots .$$
This shows
$$\len (H_i(X))\leq \len (H_i(Y))+\len (\chi H_{i-n}(X)) .$$
Iterating $s$ times, we find
$$\len (H_i(X))\leq \len (H_i(Y))+\len(\chi H_{i-n}(Y)) +\cdots +
\len(\chi^{s-1} H_{i-(s-1)n}(Y)) +
\len(\chi^s H_{i-sn}(X)) .$$

First suppose that $n>0$.
The case that $H_*(X)$ is bounded below is familiar.
If $h_X(t)$ is the Hilbert series of $H_*(X)$ then we have
$$h_X(t) \leq h_Y(t)(1+t^{n+1}+t^{2n+2}+\cdots )=\frac{h_Y(t)}{1-t^{n+1}},$$
giving the required growth estimate. 

To extend to the general case, we argue that $\chi^s H_{i-sn}(X)=0$ for $s>>0$. 
Indeed, we have a decreasing chain of subgroups of the finite length module 
$H_i(X)$, which therefore stabilizes. The stable value must be zero by Nakayama's lemma. Thus there is a $j=j(i)$ so that if $i-sn \leq j(i)$ then $\chi^s$ 
is zero from degree $i-sn$. Now taking $j=\min\{j(0),j(1), \ldots , j(n-1)\}$, 
we see that for $k\leq j$ powers of $\chi$ from $H_k(X)$ have zero image
in positive degrees. This means that the number $j$ serves as a lower bound
for estimates in positive degrees, and we can use the familiar argument.

For estimates in negative degrees we argue similarly, starting 
with the estimate
$$\len (H_j(X))\leq \len (H_{j+n+1}(Y))+\len (\chi H_{j}(X)) .$$
This gives the bounded above case in the usual way. Using Nakayama's
lemma we find as $s$ so that $\chi^{s}H_j(X)=0$, and deduce that 
there is a $k$ so that no power of $\chi$ from negative degrees has
non-zero image in degrees $\geq k$. The general case then follows
like the bounded above case. 

The argument if $n<0$ is precisely similar. 
\end{proof}


\subsection{Operators and growth.}

It is convenient to introduce further terminology
in which the letter `g' stands for growth.
\begin{defn}
If $R$ is a commutative local ring we say that it is {\em gci} of codimension 
$c$ if $\Ext_R^*(k,k)$ has polynomial growth of degree $c-1$.
\end{defn}

\begin{thm} (Gulliksen \cite{Gulliksen:1971a,Gulliksen/Levin:1969a})
\label{thm:Gulliksen}
A Noetherian local ring is a complete intersection if and only if
it is gci.\qed
\end{thm}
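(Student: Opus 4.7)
The theorem combines two implications: ci implies gci, and gci implies ci. The first is essentially encoded in the material of Sections \ref{sec:hypersurfaces1}--\ref{sec:ciiszci}; the second is the deep content, originally due to Gulliksen. I address them separately.

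For ci implies gci, the plan is to package the cohomology operators $\chi_1, \ldots, \chi_c$ constructed in Subsection \ref{subsec:chiinHochschild} for $R = Q/(f_1, \ldots, f_c)$ into a ring map $S := k[\chi_1, \ldots, \chi_c] \to \Ext^*_R(k,k)$, with $|\chi_i| = 2$. I would then show, either by explicit analysis of the eventually multiperiodic resolution of Section \ref{sec:ciiszci} or by a change-of-rings spectral sequence of Cartan--Eilenberg type, that $\Ext^*_R(k,k)$ is a finitely generated $S$-module. Since $\Ext^*_Q(k,k)$ is a finite-dimensional exterior algebra (by regularity of $Q$) and the generators of $\Ext^*_R(k,k)$ over $S$ can be taken to be classes lifted from this exterior algebra, the Hilbert series of $\Ext^*_R(k,k)$ takes the form $p(t)/(1-t^2)^c$, giving polynomial growth of degree exactly $c-1$.

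For gci implies ci, the strategy is to first reduce to the complete case (gci is insensitive to completion, and the ci condition is phrased via the completion), write $\hat R = Q/I$ via Cohen's structure theorem with $Q$ a complete regular local ring and $I \subseteq \fm_Q^2$, and then build the Tate DG algebra resolution $X_\bullet$ of $k$ over $\hat R$ by adjoining Eilenberg-style divided power generators to successively kill homology. The degree-2 generators correspond to a minimal set $f_1, \ldots, f_g$ of generators of $I$ (so $g = \mu(I)$); additional Tate generators in degrees $\geq 3$ are needed precisely when $(f_1, \ldots, f_g)$ fails to be a regular sequence. Concretely, each such generator is forced by a non-vanishing Koszul homology class or a non-trivial Massey product in the DG algebra structure.

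The main obstacle, and the content of Gulliksen's theorem, is proving that any higher-degree Tate generator forces the Betti numbers of $k$ over $\hat R$ to grow exponentially, contradicting gci. The quantitative heart of the argument is a Hilbert series estimate on the free graded-commutative divided power algebra $X_\bullet \otimes_{\hat R} k$: an even-degree divided power generator of degree $2n$ contributes a factor $1/(1 - t^{2n})$ and an odd-degree generator of degree $2n+1$ contributes $1 + t^{2n+1}$, so a finite Tate construction gives polynomial growth, while an infinite one gives exponential growth. The task is to argue rigorously that this dichotomy is strict, i.e., that all deviations $\eps_n(\hat R)$ for $n \geq 3$ vanish under the gci hypothesis. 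Once this is established, $(f_1, \ldots, f_g)$ is a regular sequence, so $\hat R$, hence $R$, is ci; comparing Hilbert series on both sides then forces the codimension to equal one plus the degree of polynomial growth, matching the numerical invariants.
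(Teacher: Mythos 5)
The paper offers no proof of this statement: it is quoted as an external theorem of Gulliksen (the statement carries its own end-of-proof marker), and the surrounding text uses it purely as a black box to close the cycle ci $\Rightarrow$ zci $\Rightarrow$ gci $\Rightarrow$ ci. So there is nothing internal to compare your argument against. Your sketch of ci $\Rightarrow$ gci is a correct outline of the standard argument (finite generation of $\Ext_R^*(k,k)$ over the polynomial ring on the degree-$2$ operators, Hilbert series of the form $p(t)/(1-t^2)^c$), and it is consistent with what the paper actually proves in Sections \ref{sec:hypersurfaces1}--\ref{sec:zciisgci} by the detour through the zci condition.

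The direction gci $\Rightarrow$ ci, however, contains a genuine gap. Your quantitative heart is the product formula for the Poincar\'e series of the acyclic closure,
$$P^k_{\hat R}(t)=\frac{\prod_{i\ge 1}(1+t^{2i-1})^{\eps_{2i-1}}}{\prod_{i\ge 1}(1-t^{2i})^{\eps_{2i}}},$$
together with the dichotomy ``a finite Tate construction gives polynomial growth, an infinite one gives exponential growth.'' Even granting that dichotomy, it does not prove the theorem: a ring with only finitely many nonzero deviations, some of them in degree $\ge 3$, would have polynomial growth by your own formula, and your argument would not detect that it fails to be a complete intersection. What must be proved is precisely the rigidity statement that a single nonzero deviation $\eps_n$ with $n\ge 3$ forces super-polynomial growth of the Betti numbers of $k$ --- equivalently, that it forces infinitely many nonzero deviations (in the modern treatment, the homotopy Lie algebra then contains a free Lie subalgebra on two generators; Gulliksen's original route is a delicate induction using the degree-$2$ operators and a change-of-rings argument). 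You explicitly defer this step (``the task is to argue rigorously that this dichotomy is strict''), but that step is the entire content of the theorem; everything else in the outline is formal bookkeeping. As a map of where the difficulty lies your proposal is accurate, but as a proof it is incomplete at exactly the load-bearing point.
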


We may connect this to our module theoretic condition.

\begin{thm}
If $R$ is zci of codimension $c$ then it is also gci of codimension $c$.
\end{thm}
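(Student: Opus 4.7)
The plan is to use the zci witnesses $\chi_1,\ldots,\chi_c\in\Jac(ZD(R))$ to filter the residue field and then transport the information into the Ext algebra by applying $\RHom_R(-,k)$; the growth lemma of the previous subsection then does the rest. By the degree-zero nugatory lemma~\ref{lem:degzeronugatory}, we may assume at the outset that every $\chi_i$ has non-zero degree, since such operators leave growth unchanged. Set $X_0=k$ and $X_i=X_{i-1}/\chi_i$, so that the zci hypothesis gives $X_c=K(k;\chivec)\in\thick(R)$.

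For each $i$ the defining triangle $\Sigma^{|\chi_i|}X_{i-1}\xrightarrow{\chi_i}X_{i-1}\to X_i$ in $D(R)$, once fed through the contravariant functor $\RHom_R(-,k)$ and rotated, produces a triangle
$$\Sigma^{|\chi_i|}F_{i-1}\xrightarrow{\chi_i}F_{i-1}\to \Sigma F_i,$$
where $F_i=\RHom_R(X_i,k)$ is a DG-module over the Ext DGA $A=\RHom_R(k,k)$, and the map labelled $\chi_i$ is the central action induced from $ZD(R)$. Since $\chi_i$ is of non-zero degree, the argument of Proposition~\ref{prop:nonzeroisjac} applied inside $D(A)$ places it in $\Jac(ZD(A))$. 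Each $F_i$ is bounded below with finite-dimensional homology in each degree because $X_i\in\fg$, so the hypotheses of the growth lemma are met and we obtain $\growth(F_{i-1})\le\growth(F_i)+1$.

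Iterating these $c$ inequalities from $i=c$ down to $i=1$ yields $\growth(F_0)\le\growth(F_c)+c$. At one end $F_0=\RHom_R(k,k)$ has homology $\Ext_R^*(k,k)$, so $\growth(F_0)$ is precisely the growth invariant we want to bound. At the other end, $X_c$ is small, hence finitely built from $R$, so $F_c=\RHom_R(X_c,k)$ is finitely built from $\RHom_R(R,k)=k$ and is therefore bounded, giving $\growth(F_c)\le -1$. Combining, $\growth(\Ext_R^*(k,k))\le c-1$, which is the gci condition of codimension at most $c$; the matching lower bound on the g-codimension (so as to obtain the claimed equality) follows from the full cycle of implications together with Gulliksen's theorem.

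The main obstacle is the transport of centrality and the Jacobson-radical condition from $D(R)$ to $D(A)$. One has to check that the functor $\RHom_R(-,k)$ sends the action of $\chi_i\in ZD(R)$ to a central element of $D(A)$ of the same non-zero degree, and that the degree argument of Proposition~\ref{prop:nonzeroisjac} still applies in this DG setting. The essential ingredient is that $\chi_i$ acts nilpotently on bounded objects of $D(A)$ and that every object of interest is built from such bounded pieces, which holds because $\RHom_R(X_i,k)$ has locally finite homology; once this is set up, the rest of the argument is a bookkeeping exercise.
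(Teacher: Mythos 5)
Your argument is the same as the paper's: dualize the Koszul tower for $k$ by applying derived $\Hom_R(-,k)$, note that smallness of $K(k;\chivec)$ makes the last term bounded (growth $-1$), and iterate the growth lemma $c$ times to get $\growth(\Ext_R^*(k,k))\le c-1$. Your bookkeeping is correct (indeed slightly cleaner than the paper's), and the extra care about centrality after dualizing is a reasonable gloss on a point the paper passes over silently.
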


\begin{proof}
By hypothesis there are elements $\chi_1,\chi_2, \ldots , \chi_c$ 
in $\Jac (ZD(R))$ so that $k/\chi_c/\ldots /\chi_1$ is small.

We may apply $\Hom_R(\cdot , k)$ (i.e., derived Hom) 
to the resulting triangles in the
derived category. For brevity, we write 
$$X_i=\Hom_R(k/\chi_c/\cdots / \chi_i, k). $$
Now by hypothesis, since $k/\chi_c/\ldots /\chi_1$ is small, so
$X_1$ is bounded, and thus of growth $-1$, and 
$$H_*(X_{c+1})=H_*(\Hom_R(k,k))=\Ext_R^*(k,k), $$
so that  the theorem states
$\growth (X_{c+1})\leq c$.

Applying the lemma to the cofibre sequence
$$\Sigma^{-|\chi_{i-1}|}X_i\lla X_i\lla X_{i-1}$$
we find 
$$\growth (X_i)\leq \growth (X_{i-1})+1$$
and hence, repeating $c+1$ times, we find
$$\growth (X_{c+1})\leq c$$
as required. 
\end{proof}

\begin{remark}
In commutative algebra one says that a locally finite complex $M$ of
$R$-modules has {\em complexity} $n$ if $\Ext_R^*(M,k)$ has polynomial growth 
of degree $n-1$. In these terms, the proof proceeds by showing
for $i=1,2,\ldots , c+1$
that $k/\chi_c/\cdots /\chi_{i+1}$ has complexity $i$. 
\end{remark}

\begin{cor}
The three conditions ci, zci and gci on Noetherian local rings are 
equivalent.
\end{cor}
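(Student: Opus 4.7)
The plan is to assemble three implications into a cycle of equivalences. The implication zci $\Rightarrow$ gci is precisely the preceding theorem, whose proof made no use of completeness. The implication gci $\Rightarrow$ ci is Gulliksen's theorem, recorded as Theorem \ref{thm:Gulliksen}. So the only work left is to establish ci $\Rightarrow$ zci.

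When $R$ itself is presented as $Q/(f_1,\ldots,f_c)$ with $Q$ regular local and $f_1,\ldots,f_c$ a regular sequence, the proposition of Section \ref{sec:ciiszci} does exactly what is required: the operators $\chi_1,\ldots,\chi_c$ manufactured in Subsection \ref{subsec:chiinHochschild} through Hochschild cohomology $HH^*(R|Q) \to ZD(R)$ give central elements, and because each $\chi_i$ has positive codegree they automatically lie in $\Jac(ZD(R))$ by Proposition \ref{prop:nonzeroisjac}; the proposition then supplies smallness of $M/\chi_c/\cdots/\chi_1$ for every $M\in\fg$. So in this situation $R$ is zci with $c$ witnessing the bound on the z-codimension.

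For a general ci ring $R$ only the completion $\hat{R}$ admits such a presentation. To cover the general case I would route the argument through the completion: ci is completion-invariant by definition, while gci is completion-invariant because $k$ is already a module over $\hat{R}$ and so $\Ext_R^*(k,k)\cong \Ext_{\hat{R}}^*(k,k)$. Thus the cycle reads $R$ ci $\Leftrightarrow$ $\hat{R}$ ci $\Rightarrow$ $\hat{R}$ zci (by the presentation case) $\Rightarrow$ $\hat{R}$ gci (preceding theorem) $\Leftrightarrow$ $R$ gci $\Rightarrow$ $R$ ci (Gulliksen), and $R$ zci $\Rightarrow$ $R$ gci is the preceding theorem applied to $R$ directly. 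The main obstacle, and the only step that is not a formal concatenation, is the passage of zci itself between $R$ and $\hat{R}$; I expect this to reduce to faithful flatness of $R \to \hat{R}$ together with the fact that the $\chi_i$ can already be lifted to $ZD(R)$ because the Hochschild construction of Subsection \ref{subsec:chiinHochschild} is natural in the presentation, so that the central operators descend from $\hat{R}$ to $R$ and the smallness of the iterated Koszul cone is detected after faithfully flat base change.
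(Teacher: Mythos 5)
Your proof is correct and follows exactly the paper's route: the corollary is established by citing the cycle ci $\Rightarrow$ zci (Section \ref{sec:ciiszci}), zci $\Rightarrow$ gci (Section \ref{sec:zciisgci}), and gci $\Rightarrow$ ci (Gulliksen's Theorem \ref{thm:Gulliksen}). The completion subtlety you flag is real---the paper's main theorem is stated only for \emph{complete} local rings and its proof of the corollary silently passes over the descent of the zci condition from $\hat{R}$ to $R$---so your final paragraph is, if anything, more careful than the source, even though your proposed resolution via faithful flatness is left as a sketch.
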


\begin{proof}
We showed in Section \ref{sec:ciiszci}
that ci implies zci, and in Section \ref{sec:zciisgci} that zci implies gci.
Gulliksen's Theorem (\ref{thm:Gulliksen}) shows   gci implies ci.
\end{proof}


\end{document}